\newtheorem{theorem}{Theorem}
\newtheorem{proposition}[theorem]{Proposition}
\theoremstyle{remark}
\theoremstyle{definition}
\def \deg {\text{deg}}
\def \cF {\mathcal{F}}
\def \H {\mathcal{H}}
\def \cL {\mathcal{L}}
\def \R {\mathbb{R}}
\def \cT {\mathcal{T}}
\newcommand{\de}{\textup{deg}}
\def \lc {\left\{}
\def \rc {\right\}}
\def \ba {\lc\begin{array}{c}}
\def \ea {\end{array}\rc}
\def \Gsc {1.5} %Scale for graphs
\begin{document}
\title{Counting Spanning Trees in Threshold Graphs}
\author{Stephen R. Chestnut \and Donniell E. Fishkind}

\maketitle
\begin{abstract}
Cayley's formula states that there are $n^{n-2}$~spanning~trees in the complete~graph on $n$~vertices; it has been proved in more than a dozen different ways over its $150$~year history. The complete~graphs are a special case of threshold~graphs, and using Merris'~Theorem and the Matrix~Tree~Theorem, there is a strikingly simple formula for counting the number of spanning trees in a threshold graph on $n$~vertices; it is simply the product, over ${i=2,3, \ldots,n-1}$, of the number of vertices of degree~at~least~$i$. In this manuscript, we provide a direct combinatorial proof for this formula which does not use the Matrix~Tree~Theorem; the proof is an extension of Joyal's~proof for Cayley's~formula.  Then we apply this methodology to give a formula for the number of spanning trees in any difference~graph.
\end{abstract}

\section{Overview}

Unless otherwise specified, our graphs are undirected and simple. For any graph~$G=(V,E)$, let~$\tau(G)$ denote the number of spanning trees~in~$G$, and for any vertex~$v \in V$, $\de (v):= | \{u \in V:u \sim v \} |$ is the degree~of~$v$, where~$u \sim v$ indicates that $u$~and~$v$ are adjacent~in~$G$. Let~$K_n$ denote the complete graph on~$n$ vertices. Cayley's formula~\cite{cayley1889theorem} states that, for any positive integer~$n$, $\tau(K_n)=n^{n-2}$; this classical result has been proved in over a dozen different ways over the last $150$~years. We review and discuss this rich history in Section~\ref{sec:disc}.

A graph~$G=(V,E)$ is a \emph{threshold~graph} if there exists a weighting function~${\phi:V \rightarrow \R}$ and a threshold~$\alpha \in \R$ such that each pair of distinct vertices~$u,v$ is adjacent if and only~if ${\phi (u) + \phi (v) \geq \alpha}$. For example, the graph in Figure~\ref{fig:thresholdgraph} is a threshold~graph.  Also, $K_n$ is a threshold~graph since we may choose ${\alpha=0}$ and~${\phi(v)=1}$, for~all~${v\in V}$. The following result is a direct consequence of Merris' Theorem~\cite{merris1994degree} and the Matrix Tree Theorem~\cite{kirchhoff1847ueber}. The result was discovered independently by Bogdanowicz \cite{bogdanowicz1985thesis} and Hammer and Kelmans \cite{hammer1996laplacian}; see also~\cite{bleiler2007correction}.  It has been extended to incorporate the degree sequences of the trees by Martin and Reiner \cite{martin2003factorizations} and extended to more general classes of graphs by Duval, Klivans, and Martin \cite{duval2009simplicial} and Remmel and Williamson \cite{remmel2002spanning}.

\begin{theorem} For any threshold graph~$G=(V,E)$ with $n$ vertices it holds that
$$\tau (G)= \frac{1}{n}\prod_{i=1}^{n-1} | \{ v\in V: \de (v) \geq i  \} |.$$
\label{thm:main}
\end{theorem}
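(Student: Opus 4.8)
The plan is to prove the theorem by a direct bijection in the spirit of Joyal, with no appeal to the Matrix~Tree~Theorem. Write $d_i:=|\{v\in V:\de(v)\ge i\}|$, so the goal is $\tau(G)=\frac1n\prod_{i=1}^{n-1}d_i$. I would first clear away the degenerate cases. If $G$ is disconnected then no vertex is universal, so no vertex has degree $n-1$, whence $d_{n-1}=0$ and both sides vanish; small $n$ is checked directly. So I may assume $G$ is connected, and then --- since a connected threshold graph always has a universal vertex --- we have $d_1=n$. The one structural fact I would isolate and use repeatedly is that, once the vertices are listed in non-increasing order of degree, the threshold condition makes the neighborhoods nested: each vertex is adjacent to exactly an initial segment of the highest-degree vertices.

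The construction would follow Joyal's encoding of Cayley's formula. Joyal bijects doubly-rooted trees (``vertebrates'') with endofunctions $f:V\to V$, turning the path between the two roots into the cyclic core of the functional digraph via the fundamental bijection and leaving the subtrees that hang off the path as the subtrees hanging off the core; since there are $n^2\tau(K_n)$ vertebrates and $n^n$ endofunctions, this gives $\tau(K_n)=n^{n-2}$. I would build the analogue for $G$, restricting to vertebrates whose underlying tree is a spanning tree of $G$. Under the correspondence this becomes two conditions on $f$: every vertex off the core maps to a genuine $G$-neighbor, and the word encoding the core traces an actual path in $G$. The heart of the argument would be to count the admissible $f$ \emph{level by level}: using that neighborhoods are initial segments of the degree order, I would show that at level $i$ there are exactly $d_i$ legal options and that the choices are independent across levels $i=0,1,\dots,n-1$ (with $d_0=n$). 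This yields $n^2\tau(G)=\prod_{i=0}^{n-1}d_i$; since $d_0=d_1=n$, dividing gives $\tau(G)=\frac1n\prod_{i=1}^{n-1}d_i$, and for $K_n$ every $d_i=n$ recovers $n^{n-2}$.

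I expect the passage through the core --- the spine of the tree --- to be the main obstacle. For $K_n$ the fundamental bijection is unconstrained, since every pair of vertices is an edge, so all $n^n$ endofunctions occur; for a threshold graph I must instead prove that the adjacency requirement along the spine is \emph{equivalent} to the clean level-by-level condition that supplies the factors $d_i$, and that the encoding inverts, i.e.\ that the spine can be reconstructed uniquely from its permutation while respecting adjacency. A secondary nuisance is that ordering by degree is only a preorder: equal-degree vertices come in blocks of twins, so I would fix a tie-breaking convention and verify the initial-segment bookkeeping does not depend on it. By comparison, the reductions above and the hanging-subtree part --- where the condition is merely ``map to a neighbor'' --- should be routine.
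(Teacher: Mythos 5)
Your overall strategy is the same family as the paper's (a Joyal-style correspondence between endofunctions and doubly-rooted spanning trees), and your reductions to the connected case are fine, but the proposal leaves its own ``heart'' unproved, and as stated that step would fail. Under the plain Joyal encoding, the condition ``the spine is a path in $G$'' does \emph{not} become a vertex-local condition on $f$: if the spine vertices in the reference linear order are $u_1<u_2<\cdots<u_m$ and the spine is $\nu_1,\ldots,\nu_m$, the fundamental bijection sets $f(u_i)=\nu_i$, so spanning-tree admissibility reads ``$f(u_i)\sim f(u_{i+1})$ for all $i$'' --- a coupled condition on pairs of cycle vertices, not an independent menu of $d_i$ choices per vertex. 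Consequently the image of the spanning vertebrates of $G$ is not a product set, and no amount of tie-breaking among equal-degree twins fixes this; you must change the encoding itself, which is exactly what the paper does. There is also a concrete accounting obstruction to your version with both roots free: in any functional digraph whose edges are edges of $G$ or loops, the latest-created vertex of each cycle is forced to be a dominating vertex (it is adjacent to its cycle-predecessor, which was created earlier, or it carries a loop), and in any cycle-breaking scheme of this kind that vertex becomes the black root. So the black root cannot range over all $n$ vertices; the realizable count is $|U|\cdot n\cdot\tau(G)$, not $n^2\tau(G)$.

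The missing idea is to use the \emph{creation sequence} rather than the degree order alone. The paper partitions $V$ into dominating vertices $U$, independent vertices $Z$, and $v_*$, and defines the product set $\cF$ of functions with $f(v)\in N[v]$ for $v\in U$ (loops allowed \emph{only} there) and $f(v)\in N(v)$ for $v\in Z\cup\{v_*\}$; since $|N(v_*)|=|U|$, one gets $|\cF|=|U|\prod_{v\in U}(\de(v)+1)\prod_{v\in Z}\de(v)$, and the trees are marked with a black vertex constrained to lie in $U$, so $|\cT|=|U|\cdot n\cdot\tau(G)$ and the factor $|U|$ cancels. The bijection then breaks each cycle at its latest-created vertex $\ell_j$ (with cycle-predecessor $b_j$), orders the cycles by creation of $\ell_j$, and reconnects with the edges $b_j\ell_{j+1}$; these reconnecting edges are guaranteed to be edges of $G$ precisely because each $\ell_{j+1}$ is a dominating vertex created later than $b_j$. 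That property --- every dominating vertex is adjacent to all vertices created before it --- is the engine that turns the spine condition into independent local choices; nested neighborhoods in the degree preorder, which is all your proposal invokes, do not by themselves supply it. Until you commit to this (or an equivalent) modification of the encoding, the claim ``$d_i$ legal options at level $i$, independently'' is an assertion of the theorem, not a proof of it.
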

Notice that if $G$ has no isolated vertex then ${|\{v\in V:\de(v)\geq 1\}|=n}$; whereas, if~$G$ has an isolated vertex then ${|\{v\in V:\de(v)\geq n-1\}=0}$, for~${n\geq2}$.  Thus, the formula is equivalent to ${\tau(G)=\prod_{i=2}^{n-1} | \{ v\in V: \de (v) \geq i  \} |}$, for~${n\geq3}$.  

Each of the $n$~vertices in~$K_n$ has degree~$n-1$, so the formula in Theorem~\ref{thm:main} becomes~${\tau(K_n)=n^{n-2}}$, Cayley's~formula. Thus, Theorem~\ref{thm:main} is a generalization of Cayley's formula from the complete graph to all threshold graphs.

In Section~\ref{sec:cons} we review the Matrix Tree~Theorem, Merris'~Theorem, and we review how Theorem~\ref{thm:main} immediately follows from these results. This approach to proving Theorem~\ref{thm:main} is standard~and~elegant, however the very combinatorial simplicity of Theorem~\ref{thm:main} demands a direct combinatorial proof.  

Section~\ref{sec:proper} reviews properties of threshold graphs, and then Section~\ref{sec:comb} presents a direct combinatorial proof of Theorem~\ref{thm:main}.  The proof is an extension of Joyal's proof of Cayley's formula~\cite{joyal1981theorie} and of the related proof due to E\u gecio\u glu and Remmel~\cite{egecioglu1986bijections}.  It can be seen as a specialization of Remmel and Williamson's proof of Theorem~2.4 in~\cite{remmel2002spanning}.  

In Section~\ref{sec:diff} the methodology from threshold graphs is applied to difference graphs, and the corresponding formula for the number of spanning trees in a difference graph is developed.  We conclude in Section~\ref{sec:disc} with discussion.

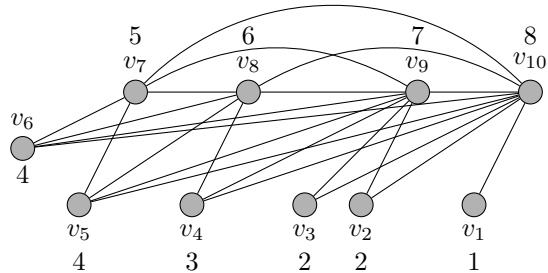
\begin{figure}[t]
\begin{center}
\begin{tikzpicture}[>=stealth,-,auto]
    \tikzstyle{every node}=[draw,circle,fill=black!30,minimum size=9pt,inner sep=0pt]
    \draw (0,0.5*\Gsc)      node (6) [label=above:$v_6$] {};
    \draw (0.5*\Gsc,0)      node (5) [label=below:$v_5$] {};
    \draw (1*\Gsc,1*\Gsc)   node (7) [label=above:$v_7$] {};
    \draw (1.5*\Gsc,0)      node (4) [label=below:$v_4$] {};
    \draw (2*\Gsc,1*\Gsc)   node (8) [label=above:$v_8$] {};
    \draw (2.5*\Gsc,0)      node (3) [label=below:$v_3$] {};
    \draw (3*\Gsc,0)        node (2) [label=below:$v_2$] {};
    \draw (3.5*\Gsc,1*\Gsc) node (9) [label=above:$v_9$] {};
    \draw (4*\Gsc,0)        node (1) [label=below:$v_1$] {};
    \draw (4.5*\Gsc,1*\Gsc) node (10) [label=above:$v_{10}$] {};

    \node at (0,0.4)               [draw=none,fill=none] {4};
    \node at (0.5*\Gsc,-0.5*\Gsc) [draw=none,fill=none] {4};
    \node at (1*\Gsc,1.5*\Gsc)   [draw=none,fill=none] {5};
    \node at (1.5*\Gsc,-0.5*\Gsc) [draw=none,fill=none] {3};
    \node at (2*\Gsc,1.5*\Gsc)   [draw=none,fill=none] {6};
    \node at (2.5*\Gsc,-0.5*\Gsc) [draw=none,fill=none] {2};
    \node at (3*\Gsc,-0.5*\Gsc)   [draw=none,fill=none] {2};
    \node at (3.5*\Gsc,1.5*\Gsc) [draw=none,fill=none] {7};
    \node at (4*\Gsc,-0.5*\Gsc)   [draw=none,fill=none] {1};
    \node at (4.5*\Gsc,1.5*\Gsc) [draw=none,fill=none] {8};

    \draw (7) -- (5);
    \draw (7) -- (6);
    \draw (8) -- (4);
    \draw (8) -- (7);
    \draw (8) -- (5);
    \draw (8) -- (6);
    \draw (9) -- (6);
    \draw (9) -- (5);
    \draw (9) to[bend right=30] (7);
    \draw (9) -- (4);
    \draw (9) -- (8);
    \draw (9) -- (3);
    \draw (9) -- (2);
    \draw (10) -- (6);
    \draw (10) -- (5);
    \draw (10) to[bend right=45] (7);
    \draw (10) -- (4);
    \draw (10) to[bend right=30] (8);
    \draw (10) -- (3);
    \draw (10) -- (2);
    \draw (10) -- (9);
    \draw (10) -- (1);
\end{tikzpicture}
\end{center}
\caption{A threshold graph with an integral weight assigned to each vertex.  Two vertices are adjacent if and only if the sum of their weights is at least 9.}\label{fig:thresholdgraph}
\end{figure}

\subsection{Theorem~\ref{thm:main} as a consequence of Merris' Theorem and the Matrix Tree Theorem \label{sec:cons}}

Suppose the graph~$G$ has vertices~$v_1,v_2,\ldots,v_n$. The \emph{Laplacian} of~$G$ is the matrix~${\cL \in \R^{n \times n}}$ such that every entry~$L_{ij}$ is $\de (v_i)$,~$-1$,~or~$0$ according as ${i=}j$,~${v_i \sim v_j}$,~or~neither. The Matrix Tree~Theorem can be stated in terms of submatrices of~$\cL$ or in terms of the eigenvalues of~$\cL$.  Since $\cL$ is symmetric, its eigenvalues can be ordered as ${\lambda_{1} \geq \lambda_{2} \geq \cdots \geq \lambda_{n}}$. It is easy to see (by the Gershgorin~Theorem and the fact that every row sum of~$\cL$ is zero) that~$\cL$ is positive semidefinite and singular, hence~$\lambda_{n}=0$.

\begin{theorem}[Kirchhoff~\cite{kirchhoff1847ueber}] For any graph~$G$ on $n$~vertices it holds that 
\[\tau (G)=\left|\det{\cL'}\right|=\frac{1}{n}\prod_{i=1}^{n-1}\lambda_{i},\]
where $\cL'$ is any $(n-1)\times(n-1)$ submatrix of $\cL$.
\end{theorem}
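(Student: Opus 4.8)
The plan is to prove the two equalities separately, anchoring both to the combinatorics of spanning trees via the signed incidence matrix. First I would fix an arbitrary orientation of the edges of~$G$ and form the $n \times |E|$ incidence matrix~$B$, with $B_{ve} = +1$ if $v$ is the head of~$e$, $-1$ if $v$ is the tail, and $0$ otherwise. An entrywise check gives $\cL = BB^T$. Deleting the row of~$B$ indexed by a fixed vertex~$i$ yields the reduced incidence matrix~$B_i$ of size $(n-1)\times|E|$, and the principal submatrix~$\cL_i$ obtained by deleting row and column~$i$ from~$\cL$ satisfies $\cL_i = B_i B_i^T$. Applying the Cauchy--Binet formula then gives
\[ \det(B_i B_i^T) = \sum_{S} \left(\det B_i[S]\right)^2, \]
where $S$ ranges over all $(n-1)$-element subsets of the edge set and $B_i[S]$ is the square submatrix on the columns indexed by~$S$.

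The crux of the argument, and the step I expect to be the main obstacle, is the combinatorial lemma that $\det B_i[S] \in \{-1,0,+1\}$, with $\det B_i[S] = \pm 1$ exactly when the edges of~$S$ form a spanning tree of~$G$ and $0$ otherwise. The ``$0$'' direction follows because a subset~$S$ containing a cycle yields signed incidence columns that are linearly dependent around that cycle. For the ``$\pm 1$'' direction I would induct on~$n$: a spanning tree has a leaf, whose unique incident edge contributes a single $\pm 1$ in its row of~$B_i[S]$, and Laplace-expanding along that row reduces to a spanning tree on one fewer vertex. (Alternatively one invokes total unimodularity of incidence matrices.) Granting the lemma, the Cauchy--Binet sum counts spanning trees, so $\det \cL_i = \tau(G)$.

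To upgrade from a principal minor to an arbitrary $(n-1)\times(n-1)$ submatrix, I would show that all cofactors of~$\cL$ are equal. If $G$ is disconnected then $\mathrm{rank}(\cL)\le n-2$, so every such minor vanishes and $\tau(G)=0$; otherwise, from $\cL\,\mathrm{adj}(\cL) = (\det\cL)I = 0$ each column of $\mathrm{adj}(\cL)$ lies in $\ker\cL = \mathrm{span}\{\mathbf{1}\}$, and combining this with the symmetry of~$\cL$ forces every entry of $\mathrm{adj}(\cL)$ to equal the common value~$\tau(G)$. Deleting row~$i$ and column~$j$ then gives $\det\cL' = (-1)^{i+j}\tau(G)$, so $|\det\cL'| = \tau(G)$ for every choice, which is the first equality.

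Finally, for the eigenvalue formula I would extract the coefficient of~$x$ in the characteristic polynomial $\det(xI - \cL) = \prod_{i=1}^{n}(x-\lambda_i)$. Since $\lambda_n = 0$, this coefficient equals $(-1)^{n-1}\prod_{i=1}^{n-1}\lambda_i$. On the other hand, the coefficient of~$x$ in $\det(xI - \cL)$ is $(-1)^{n-1}$ times the sum of the $n$ principal $(n-1)\times(n-1)$ minors of~$\cL$, each of which equals $\tau(G)$ by the previous step; that sum is $n\,\tau(G)$. Equating the two expressions yields $\prod_{i=1}^{n-1}\lambda_i = n\,\tau(G)$, as claimed.
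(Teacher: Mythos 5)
Your proposal is correct, but there is nothing in the paper to compare it against: this statement is Kirchhoff's Matrix Tree Theorem, which the authors quote with a citation and use as a black box (together with Merris' Theorem) to derive Theorem~1; indeed the entire purpose of their Section~2 is to give a combinatorial proof of Theorem~1 that bypasses Kirchhoff's theorem altogether. Judged on its own, your argument is the standard and complete proof. The chain $\cL=BB^T$, Cauchy--Binet applied to $\cL_i=B_iB_i^T$, and the lemma that $\det B_i[S]\in\{0,\pm1\}$ with $\pm1$ exactly on spanning trees is sound; in the leaf induction you should note that the leaf can be chosen different from the deleted vertex~$i$, which is possible because every tree on at least two vertices has at least two leaves. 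The adjugate argument upgrading principal minors to arbitrary $(n-1)\times(n-1)$ submatrices is also right, though the assertion $\ker\cL=\mathrm{span}\{\mathbf{1}\}$ for connected~$G$ deserves its one-line justification, namely $x^T\cL x=\sum_{uv\in E}(x_u-x_v)^2$, so $\cL x=0$ forces $x$ to be constant on connected components; the disconnected case is correctly dispatched by the rank bound. Finally, extracting the coefficient of~$x$ from $\det(xI-\cL)$ and matching it with $(-1)^{n-1}$ times the sum of the $n$ principal $(n-1)\times(n-1)$ minors correctly yields the eigenvalue form, and it does so uniformly in the disconnected case as well, since there both $\tau(G)$ and $\lambda_{n-1}$ vanish. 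In short: no gap, but also no overlap with the paper, whose contribution is precisely to avoid the theorem you proved.
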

The earliest proofs of Cayley's~formula~\cite{sylvester1857on,borchardt1860uber} use the Matrix Tree~Theorem; indeed ${\cL_{K_n}=nI-J}$ where $I$ is the ${n \times n}$ identity matrix and $J$ is the ${n \times n}$ matrix of all ones.  The eigenvalues of~$\cL_{K_n}$ are easily seen to be ${n,n,\ldots,n,0}$; thus the Matrix Tree Theorem gives the number of spanning trees in $K_n$ as $\frac{1}{n}n^{n-1}=n^{n-2}$.

In general, the eigenvalues of a Laplacian matrix are not integers. However, for threshold graphs, not only are the eigenvalues integers but they are easily described with the graph's degree sequence.
\begin{theorem}[Merris~\cite{merris1994degree}] For any threshold graph~$G=(V,E)$ with $n$~vertices, it holds for each~$i=1,2,\ldots,n$~that 
\[\lambda_{i}=| \{ v\in V: \de (v) \geq i  \}|.\]
\end{theorem}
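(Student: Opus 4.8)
The plan is to induct on the number of vertices~$n$, exploiting the constructive characterization of threshold graphs. Directly from the definition, raising a vertex's weight~$\phi$ can only enlarge its neighborhood, so the neighborhoods are nested by weight; in particular the lowest-weight vertex is isolated or the highest-weight vertex is adjacent to all others (\emph{dominating}), and deleting such a vertex (restricting~$\phi$) leaves a smaller threshold graph. Thus a threshold graph on~$n$ vertices is obtained from one on~$n-1$ vertices by adjoining a single new vertex~$w$ that is either isolated or dominating. I will show that both the multiset of Laplacian eigenvalues and the conjugate degree sequence $\big(|\{v:\de(v)\ge i\}|\big)_{i=1}^{n}$ transform in exactly the same way under each of these two operations; since the two sequences agree on the one-vertex graph (both equal $(0)$), the inductive step forces them to agree for every threshold graph.

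First I would dispatch the isolated-vertex case. If~$w$ is isolated, then $\cL(G)$ is the block-diagonal matrix $\cL(H)\oplus[0]$, so the spectrum of~$G$ is that of~$H$ with one extra~$0$ appended; and since~$w$ contributes no vertex of positive degree, the conjugate degree sequence likewise only gains a trailing~$0$. Hence the two sides transform identically, and this case reduces directly to the inductive hypothesis applied to~$H$.

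The dominating-vertex case is the crux, and I would handle it by passing to the complement. Because~$w$ is adjacent to every earlier vertex, it becomes isolated in~$\overline{G}$, and the remaining vertices induce~$\overline{H}$; thus $\overline{G}=\overline{H}\sqcup\{w\}$, a disjoint union to which the isolated-vertex analysis applies. Combining this with the complement spectral identity for the Laplacian---which I would derive from $\cL(G)+\cL(\overline{G})=nI-J$ by restricting to the hyperplane $\mathbf{1}^{\perp}$ (on which $J$ vanishes and both Laplacians act), giving $\lambda_i(G)=n-\lambda_{n-i}(\overline{G})$ for $i=1,\dots,n-1$ and $\lambda_n(G)=0$---expresses the spectrum of~$G$ in terms of that of~$\overline{H}$. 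The inductive hypothesis, applied to the threshold graph~$\overline{H}$ on~$n-1$ vertices, identifies the latter with the conjugate degree sequence of~$\overline{H}$.

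It then remains to check that the conjugate degree sequence of~$G$ obeys the matching combinatorial identity. Writing $\de_G(v)=\de_H(v)+1$ for $v\in V(H)$ and $\de_G(w)=n-1$, one finds $|\{v:\de_G(v)\ge i\}| = 1+|\{v\in V(H):\de_H(v)\ge i-1\}|$ for $1\le i\le n-1$ and $0$ for $i=n$; the complement relation $\de_{\overline{H}}(v)=(n-2)-\de_H(v)$ converts this into precisely the mirror identity $|\{v:\de_{\overline{H}}(v)\ge n-i\}| = (n-1)-|\{v\in V(H):\de_H(v)\ge i-1\}|$ that the spectral side demands. I expect this index bookkeeping---keeping the off-by-one shifts and the appended zeros consistent between the spectral and combinatorial sides---to be the only delicate part; the spectral inputs themselves (the block structure and the complement identity) are routine once set up.
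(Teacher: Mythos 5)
The paper offers no proof of this statement to compare against: Merris' Theorem is quoted from~\cite{merris1994degree} and used as a black box, together with the Matrix Tree Theorem, to obtain Theorem~\ref{thm:main} in Section~\ref{sec:cons}; the paper's own contribution runs in the opposite direction, replacing all spectral reasoning with a bijection. Judged on its own, your induction is correct, and it is complementary to the paper in a useful way: it would make the derivation in Section~\ref{sec:cons} self-contained. Your decomposition is exactly the one underlying the paper's Proposition~\ref{prop:creat} (every threshold graph on $n\geq2$ vertices is a threshold graph on $n-1$ vertices plus one new vertex that is isolated or dominating). The isolated case is immediate, as you say. In the dominating case your identities check out: $\overline{G}=\overline{H}\sqcup\{w\}$; restricting $\cL(G)+\cL(\overline{G})=nI-J$ to $\mathbf{1}^{\perp}$ gives $\lambda_i(G)=n-\lambda_{n-i}(\overline{G})$ for $i=1,\ldots,n-1$; and the bookkeeping $\de_{\overline{H}}(v)=(n-2)-\de_H(v)$ yields $n-\lambda_{n-i}(\overline{G})=n-\bigl((n-1)-|\{v\in V(H):\de_H(v)\geq i-1\}|\bigr)=1+|\{v\in V(H):\de_H(v)\geq i-1\}|=|\{v\in V(G):\de_G(v)\geq i\}|$, with $\lambda_n(G)=0$ matching the empty count at $i=n$. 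Two points should be made explicit in a final write-up, though neither is a genuine gap: (i) the induction hypothesis is applied to $\overline{H}$, so you must record that the complement of a threshold graph is a threshold graph (negate $\phi$ and adjust $\alpha$, using finiteness to restore a weak inequality; equivalently, swap the ``$0$''s and ``$1$''s in the creation sequence); and (ii) the substitution $\lambda_{n-i}(\overline{G})=\lambda_{n-i}(\overline{H})$ needs the one-line remark that appending one extra $0$ to the positive-semidefinite spectrum of $\overline{H}$ leaves the top $n-1$ sorted eigenvalues unchanged. With those two sentences added, the proof stands.
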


Theorem~\ref{thm:main} now follows immediately from the Matrix Tree~Theorem and Merris'~Theorem: 
\[\tau (G)= \frac{1}{n} \prod_{i=1}^{n-1}\lambda_{i}= \frac{1}{n}\prod_{i=1}^{n-1}| \{ v\in V: \de (v) \geq i  \}|.\] 

\subsection{Characterization and properties of threshold graphs \label{sec:proper}}

\begin{figure}
\begin{center}
\begin{tikzpicture}[>=stealth,-,auto]
    \tikzstyle{every node}=[draw,circle,fill=black!30,minimum size=9pt,inner sep=0pt]
    \draw (0,0.5*\Gsc)      node (6) [label=above:$v_6$] {};
    \draw (0.5*\Gsc,0)      node (5) [label=below:$v_5$] {};
    \draw (1*\Gsc,1*\Gsc)   node (7) [label=above:$v_7$] {};
    \draw (1.5*\Gsc,0)      node (4) [label=below:$v_4$] {};
    \draw (2*\Gsc,1*\Gsc)   node (8) [label=above:$v_8$] {};
    \draw (2.5*\Gsc,0)      node (3) [label=below:$v_3$] {};
    \draw (3*\Gsc,0)        node (2) [label=below:$v_2$] {};
    \draw (3.5*\Gsc,1*\Gsc) node (9) [label=above:$v_9$] {};
    \draw (4*\Gsc,0)        node (1) [label=below:$v_1$] {};
    \draw (4.5*\Gsc,1*\Gsc) node (10) [label=above:$v_{10}$] {};

    \node at (0,-0.5*\Gsc)        [draw=none,fill=none] {*};
    \node at (0.5*\Gsc,-0.5*\Gsc) [draw=none,fill=none] {0};
    \node at (1*\Gsc,-0.5*\Gsc)   [draw=none,fill=none] {1};
    \node at (1.5*\Gsc,-0.5*\Gsc) [draw=none,fill=none] {0};
    \node at (2*\Gsc,-0.5*\Gsc)   [draw=none,fill=none] {1};
    \node at (2.5*\Gsc,-0.5*\Gsc) [draw=none,fill=none] {0};
    \node at (3*\Gsc,-0.5*\Gsc)   [draw=none,fill=none] {0};
    \node at (3.5*\Gsc,-0.5*\Gsc) [draw=none,fill=none] {1};
    \node at (4*\Gsc,-0.5*\Gsc)   [draw=none,fill=none] {0};
    \node at (4.5*\Gsc,-0.5*\Gsc) [draw=none,fill=none] {1};
    \node at (2*\Gsc,-1*\Gsc)     [draw=none,fill=none] {};

    \draw (7) -- (5);
    \draw (7) -- (6);
    \draw (8) -- (4);
    \draw (8) -- (7);
    \draw (8) -- (5);
    \draw (8) -- (6);
    \draw (9) -- (6);
    \draw (9) -- (5);
    \draw (9) to[bend right=30] (7);
    \draw (9) -- (4);
    \draw (9) -- (8);
    \draw (9) -- (3);
    \draw (9) -- (2);
    \draw (10) -- (6);
    \draw (10) -- (5);
    \draw (10) to[bend right=45] (7);
    \draw (10) -- (4);
    \draw (10) to[bend right=30] (8);
    \draw (10) -- (3);
    \draw (10) -- (2);
    \draw (10) -- (9);
    \draw (10) -- (1);
\end{tikzpicture}
\end{center}
\caption{A threshold graph with the creation sequence $*010100101$.  The vertices appear from left to right according to their creation order with dominating vertices drawn raised in the figure and independent vertices drawn lowered in the figure.}\label{fig:creat}
\end{figure}
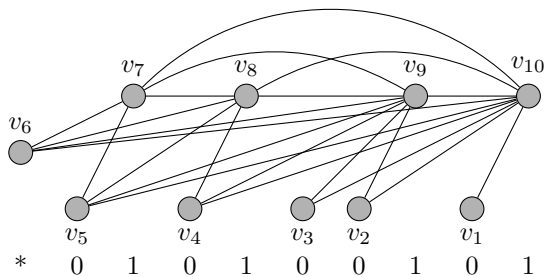

A \emph{creation sequence} is a string of characters $c_1,c_2,\ldots,c_n$ for some positive integer~$n$ such that $c_1$ is the character~``$*$" and, for each~$i>1$, $c_i$~is either the character~``$1$" or the character~``$0$". A creation sequence is viewed as a set of instructions for iteratively constructing a particular graph; specifically, the first character~``$*$" indicates that we begin with a single vertex, then iteratively, for each $i=2,3,\ldots,n$, a new vertex is added.  If~$c_i$ is~``$1$" then the $i^{th}$~vertex is added with edges to all of the $i-1$ currently-existing vertices; such a vertex will be called a \emph{dominating~vertex}.  If~$c_i$ is~``$0$" then the $i^{th}$~vertex is added without any edges; such a vertex will be called an \emph{independent~vertex}.  We adopt the naming convention that the first vertex (per~character~``$*$") is not ``dominating'' nor ``independent.'' If~$i<j$ then we say the vertex added per~$c_j$ is \emph{created~later} than the vertex added per~$c_i$. Figure~\ref{fig:creat} illustrates that the graph in Figure~\ref{fig:thresholdgraph} can be constructed according to the creation sequence~$*010100101$.

\begin{proposition}[Hagberg, Swart, and Schult \cite{hagberg2006designing}]
A graph~$G$ is a threshold graph if and only if there is a creation sequence that can construct it. \label{prop:creat}
\end{proposition}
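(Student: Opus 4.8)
The plan is to prove the biconditional in Proposition~\ref{prop:creat} in two directions. The easier direction is to show that any graph constructed by a creation sequence is a threshold graph; the harder direction is to show that every threshold graph admits such a creation sequence. I expect the main obstacle to be the backward direction, where I must extract a valid creation order from an arbitrary weighting function~$\phi$ and threshold~$\alpha$.

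\emph{Forward direction.} Suppose $G$ is constructed from a creation sequence $c_1,c_2,\ldots,c_n$. First I would assign weights by working through the sequence and showing that the ``create later'' order is compatible with a threshold weighting. Concretely, I would argue that at each step the newly added vertex is either adjacent to \emph{all} previously created vertices (if dominating) or to \emph{none} of them (if independent), and that this nested adjacency structure is exactly what a threshold weighting produces. The clean way to make this rigorous is to assign to the $i^{th}$ vertex a weight $\phi(v)$ that is monotone in the creation order, with a large positive value for dominating vertices and a small value for independent ones, then choose $\alpha$ so that $\phi(u)+\phi(v)\geq\alpha$ holds precisely for the pairs that the creation sequence makes adjacent. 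I would verify the threshold inequality directly by cases on whether the later-created vertex of the pair is dominating or independent.

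\emph{Backward direction.} Suppose $G=(V,E)$ is a threshold graph with weighting~$\phi$ and threshold~$\alpha$. The key idea is to read off a creation sequence by processing the vertices in decreasing order of $\phi$-weight and recovering the sequence from the back. I would order the vertices as $v_{(1)},v_{(2)},\ldots,v_{(n)}$ so that $\phi(v_{(1)})\geq\phi(v_{(2)})\geq\cdots\geq\phi(v_{(n)})$, and then argue that the last vertex in this order is either dominating or independent relative to the rest. The crucial observation is that among all vertices, the one of largest weight is adjacent to every other vertex it possibly can be, and the one of smallest weight is adjacent to the fewest; more precisely, if $\phi(u)\geq\phi(w)$ then the neighborhood of $w$ (excluding $u$) is contained in the neighborhood of $u$ (excluding $w$). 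This \emph{nested neighborhood} property, which follows immediately from the threshold inequality since $\phi(x)+\phi(w)\geq\alpha$ forces $\phi(x)+\phi(u)\geq\alpha$, is what lets me peel off one vertex at a time.

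The main obstacle, and the step requiring the most care, is turning the nested-neighborhood property into a well-defined \emph{inductive} construction. The plan is to induct on~$n$: I would identify a vertex that can serve as the last-created vertex, delete it, apply the threshold property to the remaining induced subgraph (which is still a threshold graph under the restriction of~$\phi$ and the same~$\alpha$), and prepend the appropriate character. The delicate point is choosing \emph{which} vertex to remove so that it is genuinely dominating (adjacent to all others) or independent (adjacent to none) in the current graph; here I would select either a vertex of maximum weight that is adjacent to everything, or, if none is, observe that a vertex of minimum weight must then be isolated, and remove that as an independent vertex. I would have to handle the boundary cases (ties in weight, the base case $n=1$ giving the character~``$*$'', and the possible ambiguity when a vertex is simultaneously adjacent to all or none) to confirm that the recursion terminates and yields a legitimate creation sequence $c_1=\,$``$*$'' followed by the recorded ``$1$'' and ``$0$'' characters.
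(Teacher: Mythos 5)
Your proposal is correct and follows essentially the same route as the paper's sketch: the forward direction assigns $\phi$-weights iteratively as vertices are created (high enough for dominating vertices, low enough for independent ones), and the backward direction uses the same key dichotomy --- either a maximum-weight vertex is adjacent to all other vertices or else a minimum-weight vertex is isolated --- to peel off one vertex at a time and build the creation sequence in reverse. The nested-neighborhood observation you add is a harmless strengthening of that dichotomy, not a different argument.
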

Proposition~\ref{prop:creat} is easy to prove, here are the main ideas.  First, every threshold graph has a vertex adjacent to all the other vertices or has a vertex adjacent to none of the other vertices (according as the vertices of highest $\phi$-value are or are not adjacent to the vertices of lowest $\phi$-value).  Iterative removal of such vertices yields a creation sequence for the threshold graph in reverse order. Conversely, given a creation sequence, $\phi$-weights can be iteratively assigned to the vertices, as the creation sequence creates them, that are high (or low) enough to indicate all of the adjacencies (or non-adjacencies).

Per Proposition~\ref{prop:creat}, we view every threshold graph~$G=(V,E)$ as created by a creation sequence.  We let~$U$ denote the set of dominating vertices and $Z$ denote the set of independent vertices, so $V$ is partitioned into~$U$,~$Z$, and a singleton~$\{v_*\}$ containing the vertex created at the character~``$*$". 

For any graph~${G=(V,E)}$ and~$v \in V$ define~${N_G(v):=\{ u \in V: u \sim v \}}$ and ${N_G[v]:=N_G(v) \cup \{ v \}}$. 
\begin{proposition}[Chv\'atal and Hammer \cite{chvatal1975aggregation}]Suppose $G=(V,E)$ is a threshold graph on $n$~vertices. If the vertices~$V$ are labeled~$v_1,v_2,\ldots,v_n$ such that $\de (v_1) \leq \de (v_2) \leq \cdots \leq \de (v_n)$, then there is an integer~$m\geq0$ such that $Z=\{ v_1,v_2,\ldots,v_{m} \}$, $v_*=v_{m+1}$, and $U=\{v_{m+2}, \ldots, v_{n-1}, v_n \}$. Furthermore, 
\begin{align*}
N_G(v_i)&=\{ v \in V: \de (v)\geq n-i\}, &\text{for }i=1,2,\ldots,m\text{, and}\\
N_G[v_i]&=\{ v \in V: \de (v)\geq n-i+1 \}, &\text{for }i=m+2,m+3,\ldots,n. 
\end{align*}\label{prop:neigh}
\end{proposition}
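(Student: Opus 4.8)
The plan is to prove the entire statement by induction on $n$, using the creation sequence guaranteed by Proposition~\ref{prop:creat} and peeling off the last-created vertex. Let $G$ be a threshold graph with creation sequence $c_1 c_2 \cdots c_n$, let $w$ denote the vertex created at the final character $c_n$, and let $G' = G - w$, which is the threshold graph on $n-1$ vertices created by $c_1 \cdots c_{n-1}$ and which shares the same distinguished vertex $v_*$. I would apply the induction hypothesis to $G'$ and then track how adding $w$ back changes the degree order, the partition into $Z$, $\{v_*\}$, $U$, and the neighborhood formulas. The base case $n=1$ is immediate: $Z = U = \emptyset$, $m = 0$, and both indexed families of formulas are vacuous.

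The induction splits into two cases according to $c_n$. If $c_n$ is ``$1$'', then $w$ is dominating and hence universal, so $\de(w) = n-1$ is maximum and $\de_G(u) = \de_{G'}(u) + 1$ for every other vertex $u$; consequently the degree order of $G$ is the degree order of $G'$ with $w$ appended at the top, so $v_i$ coincides with the $i$-th vertex of $G'$ for $i \le n-1$ and $v_n = w$. Here $Z$, $v_*$, and $m$ are unchanged while $U$ gains $w$, matching the claimed partition. If instead $c_n$ is ``$0$'', then $w$ is independent and, being created last, is isolated, so $\de(w) = 0$ is minimum and all other degrees are unchanged; now $w$ enters at the bottom of the order, every old index shifts up by one, and $Z$ gains $w$ while $m$ increases by one. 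In both cases the asserted description of $Z$, $\{v_*\}$, and $U$ follows directly, and any degree ties are broken so as to respect the block order, which is legitimate because tied vertices have equal degree and the new vertex ties only with vertices at its own extreme.

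With the partition in hand I would verify the two neighborhood formulas by converting the thresholds of $G'$ (which, by induction, are taken relative to $n-1$) into the thresholds of $G$. In the dominating case the uniform shift $\de_G = \de_{G'} + 1$ turns the inequality $\de_{G'}(u) \ge (n-1) - i$ into $\de_G(u) \ge n - i$ and turns $\de_{G'}(u) \ge (n-1) - i + 1$ into $\de_G(u) \ge n - i + 1$; since $w$ is universal it lies in every neighborhood and its degree $n-1$ meets each relevant threshold, so $N_G(v_i)$ and $N_G[v_i]$ acquire exactly the set predicted by the formulas, while the brand-new vertex $v_n = w$ satisfies $N_G[w] = V = \{v : \de(v) \ge 1\}$. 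In the independent case no degrees change and $w$ adds no edges; the index shift replaces $i$ by $i+1$ in the old formulas, which is precisely the reindexing caused by inserting $w$ at the bottom, and one checks that the isolated $w$ (degree $0$) is correctly excluded from every formula set and that $N_G(w) = \emptyset = \{v : \de(v) \ge n-1\}$ because no vertex can be adjacent to an isolated vertex.

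The main obstacle I anticipate is purely bookkeeping: getting the index shifts and the open-versus-closed off-by-one to line up, since the dominating case appends at the top (shifting $n$ but not the indices below) whereas the independent case inserts at the bottom (shifting every index by one). The distinction between the threshold $n-i$ for independent vertices and $n-i+1$ for dominating vertices is exactly what makes these two shifts consistent, and verifying that correspondence — together with confirming that degree ties near $v_*$ never force a vertex out of its block — is the only delicate point. The conceptual reason it all works is the nested-neighborhood property of threshold graphs, namely that $\de(u) \le \de(v)$ forces $N_G(u) \subseteq N_G[v]$, which guarantees that each neighborhood is an up-set in the degree order of size equal to the vertex's degree.
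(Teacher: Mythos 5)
Your induction is correct, and it is a genuinely different route from what the paper offers --- indeed, the paper never proves this proposition at all: it is quoted from Chv\'atal and Hammer, and the text supplies only an informal illustration (Figure~\ref{fig:neigh}) in which the vertices are placed on a directed path in decreasing degree order and each neighborhood is read off as the set of tokens encountered along the path, i.e.\ a picture of the fact that every neighborhood is an up-set in the degree order. Your proof, by contrast, is self-contained: it exploits the same iterative structure that underlies Proposition~\ref{prop:creat}, peeling off the last-created vertex $w$ and splitting on whether $c_n$ is ``$1$'' or ``$0$''. The bookkeeping does check out. In the dominating case the uniform shift $\de_G(u)=\de_{G'}(u)+1$ exactly compensates for replacing $n-1$ by $n$ in the thresholds, and the new vertex satisfies $N_G[w]=V=\{v:\de(v)\geq 1\}$ because $w$ is universal; in the independent case the reindexing $i\mapsto i+1$ leaves the thresholds $(n-1)-i$ and $n-i$ literally unchanged, the isolated $w$ is excluded from every formula set because $m_{G'}\leq n-2$ forces all thresholds to be positive, and $N_G(w)=\emptyset=\{v:\de(v)\geq n-1\}$ because no vertex can be adjacent to $w$. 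So what your approach buys is an actual proof where the paper has a citation plus intuition; what the paper's picture buys is a direct view of the nestedness that your induction only exhibits formula by formula.

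The one point you should tighten is the treatment of ties, which you acknowledge but do not fully discharge. Your induction constructs \emph{one} degree-nondecreasing labeling that respects the blocks $Z$, $\{v_*\}$, $U$; the proposition as stated applies to \emph{every} nondecreasing labeling. For $K_2$ (creation sequence $*1$) the labeling $v_1=u$, $v_2=v_*$ is nondecreasing but violates the displayed partition for the creation sequence as given, so the general statement requires an extra step: if $\de(u)=\de(v)$ in a threshold graph then $N(u)\setminus\{v\}=N(v)\setminus\{u\}$, hence transposing $u$ and $v$ is an automorphism, and the creation sequence (and with it $U$, $Z$, $v_*$) can be re-chosen to match whatever tie-breaking the labeling uses. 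This interchangeability is exactly the nested-neighborhood property you invoke in your last sentence, and it is in fact a consequence of the formulas your induction establishes, so the gap is closable with one short lemma; but as written it is an aside rather than a step of the proof.
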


Brief contemplation can informally illuminate the idea of the proof of Proposition~\ref{prop:neigh}.  The vertices of Figure~\ref{fig:creat} are photographically~reproduced in Figure~\ref{fig:neigh}, and a directed path is drawn through the vertices in the decreasing order of their degrees.  Corresponding to each vertex, a white~token is placed on the edge of this path directly above~or~below the vertex, and the token corresponding to the vertex~$v_*$ is placed on the vertex~itself.  The tokens encountered upon crossing an edge closely correspond to the difference in the adjacencies of each of its incident vertices.  In fact, for~each~${v\in U}$ observe that $N[v]$~corresponds exactly to the tokens encountered by starting at~$v$ and following the directed path, and for~each~${v\in Z}$ the tokens encountered along the path from~$v$ correspond~to~$N(v)$.

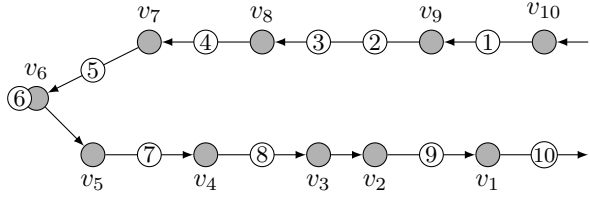
\begin{figure}
\begin{center}
\begin{tikzpicture}[-,auto]
    \tikzstyle{every node}=[draw,circle,fill=black!30,minimum size=9pt,inner sep=0pt]
    % vertex names
    \draw (0,0.5*\Gsc)      node (6)  [label=above:$v_6$]{};
    \draw (0.5*\Gsc,0)      node (5)  [label=below:$v_5$]{};
    \draw (1*\Gsc,1*\Gsc)   node (7)  [label=above:$v_7$]{};
    \draw (1.5*\Gsc,0)      node (4)  [label=below:$v_4$]{};
    \draw (2*\Gsc,1*\Gsc)   node (8)  [label=above:$v_8$]{};
    \draw (2.5*\Gsc,0)      node (3)  [label=below:$v_3$]{};
    \draw (3*\Gsc,0)        node (2)  [label=below:$v_2$]{};
    \draw (3.5*\Gsc,1*\Gsc) node (9)  [label=above:$v_9$]{};
    \draw (4*\Gsc,0)        node (1)  [label=below:$v_1$]{};
    \draw (4.5*\Gsc,1*\Gsc) node (10) [label=above:$v_{10}$]{};
    \draw (5*\Gsc,0)        node (0) [draw=none,fill=none]{};
    \draw (5*\Gsc,1*\Gsc)        node (11) [draw=none,fill=none]{};

    \draw [>=latex,->](11) -- (10);
    \draw [>=latex,->](10) -- (9);
    \draw [>=latex,->](9) -- (8);
    \draw [>=latex,->](8) -- (7);
    \draw [>=latex,->](7) -- (6);
    \draw [>=latex,->](6) -- (5);
    \draw [>=latex,->](5) -- (4);
    \draw [>=latex,->](4) -- (3);
    \draw [>=latex,->](3) -- (2);
    \draw [>=latex,->](2) -- (1);
    \draw [>=latex,->](1) -- (0);

    \node at (-0.15*\Gsc,0.5*\Gsc) [fill=white] {\small 6};
    \node at (0.5*\Gsc,0.75*\Gsc) [fill=white] {\small 5};
    \node at (1*\Gsc,0*\Gsc)   [fill=white] {\small 7};
    \node at (1.5*\Gsc,1*\Gsc) [fill=white] {\small 4};
    \node at (2*\Gsc,0*\Gsc)   [fill=white] {\small 8};
    \node at (2.5*\Gsc,1*\Gsc) [fill=white] {\small 3};
    \node at (3*\Gsc,1*\Gsc)   [fill=white] {\small 2};
    \node at (3.5*\Gsc,0*\Gsc) [fill=white] {\small 9};
    \node at (4*\Gsc,1*\Gsc)   [fill=white] {\small 1};
    \node at (4.5*\Gsc,0)      [fill=white] {\footnotesize 10};
\end{tikzpicture}
\end{center}
\caption{An illustration of Proposition~\ref{prop:neigh} with the threshold graph in Figure~\ref{fig:creat}.  Begin at any vertex~$v$ and follow the arrows anti-clockwise.  The white tokens encountered name either the vertices of $N[v]$ or those of $N(v)$ according as $v\in U$ or $v\in Z$.}\label{fig:neigh}
\end{figure}

A consequence of Proposition~\ref{prop:neigh} is that the following theorem is equivalent to Theorem~\ref{thm:main}.  In fact, Hammer and Kelmans~\cite{hammer1996laplacian} state the theorem in this form.

\begin{theorem} \label{thm:main2} For any threshold graph~$G=(V,E)$ with $n$~vertices it holds that $$\tau(G)=\frac{1}{n}\left ( \prod_{v \in U}(\de (v)+1)  \right )\left (  \prod_{v \in Z} \de (v) \right ).$$
\end{theorem}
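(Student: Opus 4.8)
The plan is to prove Theorem~\ref{thm:main2} by an explicit bijection that adapts Joyal's argument, so I first recast both sides as cardinalities. Multiplying through by $n$, the right-hand side is $\prod_{v\in U}(\de(v)+1)\prod_{v\in Z}\de(v)=\prod_{v\in U}|N_G[v]|\prod_{v\in Z}|N_G(v)|$, which is exactly the number of functions $f\colon V\to V$ satisfying $f(v_*)=v_*$, together with $f(v)\in N_G[v]$ for every $v\in U$ and $f(v)\in N_G(v)$ for every $v\in Z$; call this set $\mathcal{F}$. Meanwhile $n\,\tau(G)$ counts the pairs $(T,r)$ in which $T$ is a spanning tree of $G$ and $r\in V$ is a chosen root; call this set $\mathcal{T}$. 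Thus Theorem~\ref{thm:main2} is equivalent to $|\mathcal{F}|=|\mathcal{T}|$, and I would establish it by constructing a bijection $\Phi\colon\mathcal{F}\to\mathcal{T}$. When $G=K_n$ one has $Z=\emptyset$ and $N_G[v]=V$ for every $v\in U$, so $\mathcal{F}$ is just the set of self-maps of $V$ fixing $v_*$ and $\Phi$ reduces to the classical encoding of rooted trees by functions due to Joyal~\cite{joyal1981theorie} and to E\u gecio\u glu and Remmel~\cite{egecioglu1986bijections}.

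Next I would describe $\Phi$. Given $f\in\mathcal{F}$, form its functional digraph with an arc from each $v$ to $f(v)$, and let $P$ be the set of recurrent vertices; since $f(v_*)=v_*$ we have $v_*\in P$, and deleting the recurrent arcs leaves a forest of rooted trees whose arcs are genuine edges of $G$ (a non-recurrent vertex $v$ has $f(v)\neq v$, hence $f(v)\in N_G(v)$). It remains to linearize $P$. Writing $\sigma=f|_P$, I delete the fixed point $v_*$, apply the fundamental (Foata) transform to $\sigma|_{P\setminus\{v_*\}}$ to obtain a word listing $P\setminus\{v_*\}$, and prepend $v_*$, yielding an ordering $v_*=p_0,p_1,\dots,p_\ell$ of $P$. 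I take the path $p_0-p_1-\cdots-p_\ell$ as the spine, declare $r:=p_\ell$ the root, and reattach the hanging forest at the corresponding spine vertices, producing $(T,r)\in\mathcal{T}$. The inverse reads off the $v_*$-to-$r$ path of $T$, strips $v_*$, inverts the Foata transform to recover $\sigma$ (hence $f$ on $P$), and reads the remaining subtrees as the functional forest.

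The crux — and the place where the threshold structure is indispensable — is verifying that every spine edge $p_ip_{i+1}$ lies in $G$, using a convention that the inverse can detect. I would invoke three facts: $U\cup\{v_*\}$ is a clique; the latest-created vertex of any cycle of $\sigma$ lies in $U$; and, by Proposition~\ref{prop:neigh}, an independent vertex is adjacent precisely to the dominating vertices created after it. Ordering the cycles by the creation order of their maxima and writing each cycle starting from its maximum, the edges internal to a cycle are $\sigma$-arcs, the edge $v_*p_1$ joins $v_*$ to a dominating vertex, and each reconnecting edge joins the final vertex $x$ of one block to the maximum $u$ of the next. If $x\in U\cup\{v_*\}$ this edge lies inside the clique; if $x=z\in Z$, then $\sigma(z)$ is the maximum $u'$ of $z$'s own block, so $z\sim u'$ with $u'\in U$ forces $z$ to have been created before $u'$, and since $u'$ precedes $u$ in creation order, $z$ precedes $u$ and $z\sim u$ because $u\in U$. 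Because the cycle maxima increase in creation order and each dominates its block, they are exactly the left-to-right maxima of the word, which is what makes the Foata transform invertible. The main obstacle I anticipate is precisely this adjacency-preservation bookkeeping: fixing the ordering convention so that all reconnecting edges — especially those incident to independent vertices — are forced to be present and simultaneously remain recoverable by the inverse. Once that is settled, the routine check that the output is a spanning tree completes the proof.
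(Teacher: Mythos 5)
Your proposal is correct and takes essentially the same route as the paper's own proof: a Joyal-style bijection on functional digraphs in which each cycle is broken at its latest-created vertex (necessarily a dominating vertex), the cycles are reconnected into a path in creation order of those maxima, and the threshold structure guarantees every reconnecting edge exists and is recoverable by the inverse. The only difference is normalization: you fix $f(v_*)=v_*$ and mark a single root, whereas the paper lets $f(v_*)$ range over $N(v_*)=U$ and marks two vertices (black in $U$, white in $V$), so both of its counts carry an extra factor of $|U|$; the underlying cycle-breaking and reconnection rule is identical.
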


To see that Theorem~\ref{thm:main2} is equivalent to Theorem~\ref{thm:main}, first  let the vertices be labeled~${v_1,v_2,\ldots,v_n}$ as in Proposition~\ref{prop:neigh}. By Proposition~\ref{prop:neigh}, the number of vertices of degree at least~$1,2,\ldots,|U|$ are, respectively, $|N[v_n]|$,~$|N[v_{n-1}]|$,~\ldots,~$|N[v_{n-|U|+1}]|$ and the number of vertices of degree at least $|U|+1,|U|+2,\ldots,n-2,n-1$ are, respectively, $|N(v_{|Z|})|$,~\ldots,~$|N(v_2)|$,~$|N(v_1)|$.  Multiplying these yields 
\[\frac{1}{n}\prod_{i=1}^{n-1} | \{ v\in V: \de (v) \geq i  \} |=
\frac{1}{n}\left ( \prod_{v \in U}|N_G[v]|  \right )
\left (  \prod_{v \in Z} |N_G(v)| \right );\] 
hence Theorem~\ref{thm:main2} is equivalent to Theorem~\ref{thm:main}.

\section{A combinatorial proof of Theorem~\ref{thm:main} \label{sec:comb}}%%%%%%%

In this section we provide a combinatorial proof for Theorem~\ref{thm:main2}, hence a combinatorial proof of Theorem~\ref{thm:main}. It is an extension of Joyal's proof of Cayley's formula~\cite{joyal1981theorie} and the related proof by E\u gecio\u glu and Remmel~\cite{egecioglu1986bijections}.  The proof can be viewed as a special case of a proof due to Remmel and Williamson~\cite{remmel2002spanning}.

If $|V|=1$ the result is immediate.  Fix a connected threshold graph~${G=(V,E)}$ with at least two vertices.  Partition~$V$ into~$U$,~$Z$, and~$\{v_*\}$ according to the creation sequence of~$G$. Define~$\cF$ to be the set of all functions~${f:V \rightarrow V}$ such that ${f(v)\in N(v)}$~for~all~${v \in Z\cup\{v_*\}}$, and ${f(v) \in N[v]}$~for~all~${v \in U}$.  Of course~${N(v_*)=U}$; hence, we have 
\[|\cF|= |U| \left ( \prod_{v \in U}(\de (v)+1)  \right )\left (  \prod_{v \in Z} \de (v) \right ).\] 

Define~$\cT$ to be the set of spanning trees of~$G$ with one vertex in~$U$ ``marked~black'' and one vertex in $V$ ``marked~white'' (possibly the same vertex that is marked black), so that~$|\cT|=|U| \cdot |V| \cdot \tau (G)$.  In what follows we define a one-to-one correspondence~$\Psi:\cF \rightarrow\cT$; when this is accomplished we are done, since 
\[\tau(G)= \frac{1}{| U | \cdot |V| } |\cT|=\frac{1}{| U | \cdot |V| } | \cF | = \frac{1}{|V|} \prod_{v \in U}(\de (v)+1) \prod_{v \in Z} \de (v)\]
yields Theorem~\ref{thm:main2}.

Let $f$ be any function in $\cF$; here we describe how to find the marked tree $\Psi(f)$. The function~$f$ is uniquely identified with a directed graph~$D$ whose vertices are~$V$, and each ordered~pair~$(u,v) \in V\times V$ is a directed edge of $D$ if and only if~$f(u)=v$.  Notice that each edge of $D$ is an (undirected) edge of~$G$ or a loop.

For a moment, treat~$D$ as an undirected graph (it may have loops and multiple edges) and let~$\{ V_j \}_{j=1}^k$ be the partition of~$V$ according to connected components of the undirected~$D$.  

Now back to the directed graph~$D$, the out-degree of every vertex is exactly~one; hence, each induced subgraph~$D(V_j)$ contains exactly one directed cycle, call it~$C_j$.  For example, taking the threshold graph $G$ in Figures~\ref{fig:thresholdgraph} and~\ref{fig:creat} and the function 
\begin{equation}\label{eq:examplef}
f=\left(\begin{array}{cccccccccc}
v_1&v_2&v_3&v_4&v_5&v_6&v_7&v_8&v_9&v_{10}\\
v_{10}&v_9&v_9&v_8&v_7&v_7&v_8&v_{5}&v_{10}&v_{10}\end{array}\right)
\end{equation}
leads to the directed graph in Figure~\ref{fig:treeconstruction:a}.  That graph has two~cycles.

\begin{figure}[h!]
\begin{center}
\subfigure[]{\label{fig:treeconstruction:a}
\begin{tikzpicture}[>=stealth,-,auto]
    \tikzstyle{every node}=[draw,circle,fill=black!20,minimum size=9pt,inner sep=0pt]

    \draw (0,0.5*\Gsc)      node (6) [label=left:$v_6$] {};
    \draw (0.5*\Gsc,0)      node (5) [label=below:$v_5$] {};
    \draw (1*\Gsc,1*\Gsc)   node (7) [label=above:$v_7$] {};
    \draw (1.5*\Gsc,0)      node (4) [label=below:$v_4$] {};
    \draw (2*\Gsc,1*\Gsc)   node (8) [label=above:$v_8$] {};
    \draw (2.5*\Gsc,0)      node (3) [label=below:$v_3$] {};
    \draw (3*\Gsc,0)        node (2) [label=below:$v_2$] {};
    \draw (3.5*\Gsc,1*\Gsc) node (9) [label=above:$v_9$] {};
    \draw (4*\Gsc,0)        node (1) [label=below:$v_1$] {};
    \draw (4.5*\Gsc,1*\Gsc) node (10) [label=above:$v_{10}$] {};

    \draw[arrows={-triangle 45}] (7) -- (8);
    \draw[arrows={-triangle 45}] (6) -- (7);
    \draw[arrows={-triangle 45}] (8) -- (5);
    \draw[arrows={-triangle 45}] (5) -- (7);
    \draw[arrows={-triangle 45}] (4) -- (8);
    \draw[arrows={-triangle 45}] (9) -- (10);
    \draw[arrows={-triangle 45}] (1) -- (10);
    \draw[arrows={-triangle 45}] (10) to[loop,in=-30,out=30,looseness=10] (10);
    \draw[arrows={-triangle 45}] (3) -- (9);
    \draw[arrows={-triangle 45}] (2) -- (9);
\end{tikzpicture}}
\subfigure[]{\label{fig:treeconstruction:b}
\begin{tikzpicture}[>=stealth,-,auto]

    \draw (4.5*\Gsc,1*\Gsc) node (10) [draw,label=above:$v_{10}$,minimum size=8pt,inner sep=0pt,shape=circle,fill=white] {};

    \tikzstyle{every node}=[draw,circle,fill=black!20,minimum size=9pt,inner sep=0pt]

    \draw (0,0.5*\Gsc)      node (6) [label=left:$v_6$] {};
    \draw (0.5*\Gsc,0)      node (5) [label=below:$v_5$] {};
    \draw (1*\Gsc,1*\Gsc)   node (7) [label=above:$v_7$] {};
    \draw (1.5*\Gsc,0)      node (4) [label=below:$v_4$] {};
    \draw (2*\Gsc,1*\Gsc)   node (8) [fill=black,label=above:$v_8$] {};
    \draw (2.5*\Gsc,0)      node (3) [label=below:$v_3$] {};
    \draw (3*\Gsc,0)        node (2) [label=below:$v_2$] {};
    \draw (3.5*\Gsc,1*\Gsc) node (9) [label=above:$v_9$] {};
    \draw (4*\Gsc,0)        node (1) [label=below:$v_1$] {};

    \draw[arrows={-triangle 45}] (7) -- (8);
    \draw[arrows={-triangle 45}] (8) -- (5);
    \draw[arrows={-triangle 45}] (5) -- (7);
    \draw[arrows={-triangle 45}] (6) -- (7);
    \draw[arrows={-triangle 45}] (4) -- (8);
    \draw[arrows={-triangle 45}] (9) -- (10);
    \draw[arrows={-triangle 45}] (1) -- (10);
    \draw[arrows={-triangle 45}] (10) to[loop,in=-30,out=30,looseness=10] (10);
    \draw[arrows={-triangle 45}] (3) -- (9);
    \draw[arrows={-triangle 45}] (2) -- (9);
\end{tikzpicture}}
\subfigure[]{\label{fig:treeconstruction:c}
\begin{tikzpicture}[>=stealth,-,auto]

    \draw (4.5*\Gsc,1*\Gsc) node (10) [draw,label=above:$v_{10}$,minimum size=8pt,inner sep=0pt,shape=circle,fill=white] {};
    \tikzstyle{every node}=[draw,circle,fill=black!20,minimum size=9pt,inner sep=0pt]

    \draw (0,0.5*\Gsc)      node (6) [label=left:$v_6$] {};
    \draw (0.5*\Gsc,0)      node (5) [label=below:$v_5$] {};
    \draw (1*\Gsc,1*\Gsc)   node (7) [label=above:$v_7$] {};
    \draw (1.5*\Gsc,0)      node (4) [label=below:$v_4$] {};
    \draw (2*\Gsc,1*\Gsc)   node (8) [fill=black,label=above:$v_8$] {};
    \draw (2.5*\Gsc,0)      node (3) [label=below:$v_3$] {};
    \draw (3*\Gsc,0)        node (2) [label=below:$v_2$] {};
    \draw (3.5*\Gsc,1*\Gsc) node (9) [label=above:$v_9$] {};
    \draw (4*\Gsc,0)        node (1) [label=below:$v_1$] {};

    \draw[arrows={-triangle 45}] (7) to[bend left=30] (10);
    \draw[arrows={-triangle 45}] (8) -- (5);
    \draw[arrows={-triangle 45}] (5) -- (7);
    \draw[arrows={-triangle 45}] (6) -- (7);
    \draw[arrows={-triangle 45}] (4) -- (8);
    \draw[arrows={-triangle 45}] (9) -- (10);
    \draw[arrows={-triangle 45}] (1) -- (10);
    \draw[arrows={-triangle 45}] (3) -- (9);
    \draw[arrows={-triangle 45}] (2) -- (9);
\end{tikzpicture}}
\subfigure[]{\label{fig:treeconstruction:d}
\begin{tikzpicture}[>=stealth,-,auto]
    \tikzstyle{every node}=[draw,circle,fill=black!20,minimum size=9pt,inner sep=0pt]

    \draw (0,0.5*\Gsc)      node (6) [label=left:$v_6$] {};
    \draw (0.5*\Gsc,0)      node (5) [label=below:$v_5$] {};
    \draw (1*\Gsc,1*\Gsc)   node (7) [label=above:$v_7$] {};
    \draw (1.5*\Gsc,0)      node (4) [label=below:$v_4$] {};
    \draw (2*\Gsc,1*\Gsc)   node (8) [fill=black,label=above:$v_8$] {};
    \draw (2.5*\Gsc,0)      node (3) [label=below:$v_3$] {};
    \draw (3*\Gsc,0)        node (2) [label=below:$v_2$] {};
    \draw (3.5*\Gsc,1*\Gsc) node (9) [label=above:$v_9$] {};
    \draw (4*\Gsc,0)        node (1) [label=below:$v_1$] {};
    \draw (4.5*\Gsc,1*\Gsc) node (10) [fill=white,label=above:$v_{10}$] {};

    \draw (7) to[bend left=30] (10);
    \draw (8) -- (5);
    \draw (5) -- (7);
    \draw (6) -- (7);
    \draw (4) -- (8);
    \draw (9) -- (10);
    \draw (1) -- (10);
    \draw (3) -- (9);
    \draw (2) -- (9);
\end{tikzpicture}}
\end{center}
\caption{An illustration of the steps in the combinatorial proof of Theorem~\ref{thm:main}.  \subref{fig:treeconstruction:a} shows the directed graph for the function~\eqref{eq:examplef}; it corresponds to the marked spanning tree drawn in \subref{fig:treeconstruction:d} of the graph in Figures~\ref{fig:thresholdgraph} and~\ref{fig:creat}.}
\end{figure}
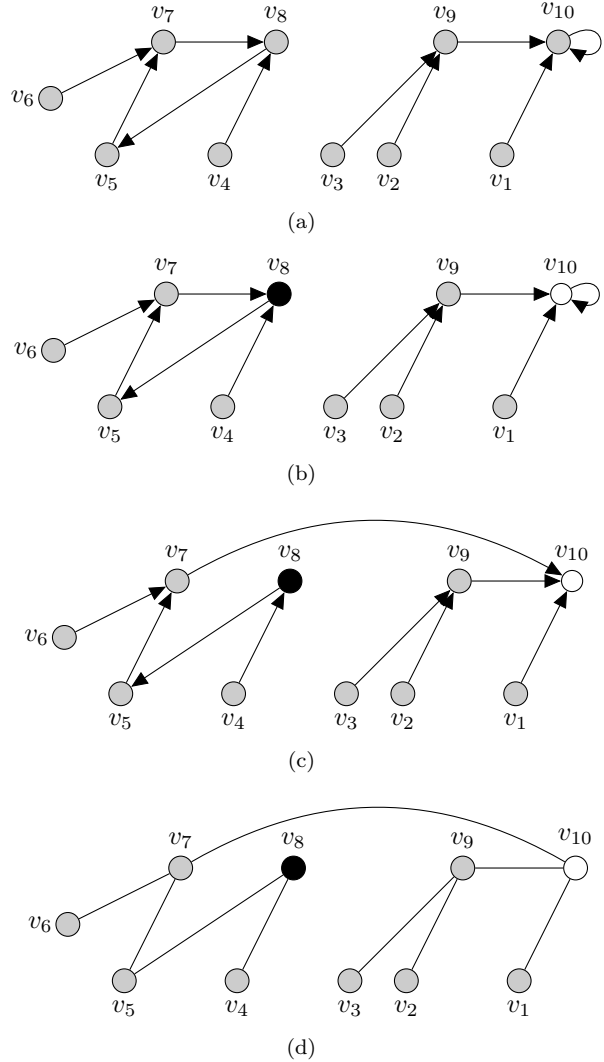

For each~$j$, let~$\ell_j$ denote the vertex created last among the vertices of the cycle~$C_j$ and let $b_j$ denote the vertex on the same cycle satisfying $f(b_j)=\ell_j$; hence, $(b_j,\ell_j)$ is an edge of $D$ that lies on the cycle $C_j$.  Notice that each vertex~$\ell_j$ is a member of $U$ either because it is adjacent to and created later than~$b_j$ or because~$f(\ell_j)=\ell_j$.

Without loss of generality, assume that~$\ell_1,\ell_2,\ldots,\ell_k$ are in creation order; i.e.~$\ell_2$ was created after~$\ell_1$, $\ell_3$ was created after~$\ell_2$, etc. Color~$\ell_1$~black and $b_k$~white.  In our example, $(b_1,\ell_1)$~is~$(v_7,v_8)$ and $(b_2,\ell_2)$~is~$(v_{10},v_{10})$; the vertices are colored for Figure~\ref{fig:treeconstruction:b}.  

The next step is to connect $\ell_1$~and~$b_k$ with a path by adding the $k-1$~edges $\{(b_j,\ell_{j+1})\}_{j=1}^{k-1}$ to~$D$ and deleting the $k$~edges $\{(b_j,\ell_j)\}_{j=1}^k$, as in Figure~\ref{fig:treeconstruction:c}.  The vertices $\ell_1,\ell_2,\ldots,\ell_k$ are members of $U$ ordered according to the creation sequence; thus, $\ell_{j+1}$ is created later than $b_j$, which implies that $b_j\ell_{j+1}$ is an edge of $G$ for~$j=1,\ldots,k-1$.  Notice that, after the additions and deletions, $D$ is now a tree with every edge directed toward $b_k$, the white vertex.  The final step is to remove the directions from the edges, as in Figure~\ref{fig:treeconstruction:d}.

Clearly, the graph~$\Psi (f)$ is a member of~$\cT$.  It is a connected subgraph of~$G$ with~$|V|-1$ edges, and the vertices $\ell_1\in U$ and $b_k$ are marked black and white, respectively. In our example, $\Psi (f)$ is the spanning tree in Figure~\ref{fig:treeconstruction:d}, with $v_8$~marked~black and $v_{10}$~marked~white.

Now, we define a function~$\Psi^-: \cT \rightarrow \cF$; it will turn out to be the inverse of~$\Psi$.  Suppose we are given any marked spanning tree~$T$ of~$G$.  First, direct every edge of~$T$ toward the white~vertex and call this directed graph~$D$. There is a unique path~$\nu_1,\nu_2,\ldots,\nu_m$ in~$D$ with $\nu_1$~marked black and $\nu_m$~marked white.  

Let~$\ell_1,\ell_2,\ldots,\ell_k$ be the longest subsequence of~$\nu_1,\nu_2,\ldots,\nu_m$ such that every vertex $\ell_j$ is created later than all vertices that precede it along the path; i.e. if $\ell_j=\nu_i$ and $i'<i$ then $\ell_j$ is created later than $\nu_{i'}$.  Of course, $\ell_1=\nu_1$ so the subsequence is not empty, and it is not hard to show that the subsequence is unique.  By construction, the vertices~$\ell_1,\ell_2,\ldots,\ell_k$ are in creation order.  Also, $\ell_k$ is created later than every other vertex on the path and, for $j=1,2,\ldots,k-1$, the vertex $\ell_j$ is created later than every other vertex preceding $\ell_{j+1}$ along the path.  Additionally, the vertex $\ell_1$ is a member of $U$ because it is marked black and the vertices $\ell_2,\ldots,\ell_k$ are members of $U$ because each is created later than its neighbor that precedes it on the path.

For example, if~$T$ is the marked tree in Figure~\ref{fig:treeconstruction:d}, then directing the edges gives the graph in Figure~\ref{fig:treeconstruction:c}.  The path~$\nu_1,\nu_2,\ldots,\nu_m$ is~$v_8,v_5,v_7,v_{10}$, and the identified vertices are~$\ell_1=v_8$ and $\ell_2=v_{10}$.

The next step is to remove the $k-1$ edges along the path that are directed into $\ell_2,\ell_3,\ldots,\ell_{k}$.  This breaks $D$ into a directed forest where each of the $k$ directed trees contains a segment of the path discussed in the last paragraphs; each of the vertices $\ell_1,\ell_2,\ldots,\ell_k$ is an endpoint of one path segment.  The final step is to add~$k$ directed edges, one edge between the endpoints of each path segment, to form~$k$ directed cycles.  Recall that, for~each~$j$, the vertex~$\ell_j$ is created later than the other vertices along its segment and $\ell_j$ is in $U$; thus each added edge is also an (undirected) edge of~$G$ or a loop on a vertex in $U$.

This manner of manipulating the edges in~$D$ enforces that, after the additions, every vertex in~$D$ has out-degree one.  Thus, $D$ represents a function~$f:V\to V$.  Since~$T$ is a spanning tree and by our careful choice of the sequence~$\{\ell_j\}$, we have ensured that every directed edge of $D$ is also an undirected edge of~$G$ or is a loop on a vertex in~$U$.  Therefore, $f\in\cF$.

By the definitions of $\Psi$ and $\Psi^-$, and with the observations
we made along the way, it is clear that $\Psi^- ( \Psi (f))=f$, for all $f \in \cF$,  and $\Psi( \Psi^- (T))=T$, for all $T \in \cT$; therefore $\Psi:\cF \rightarrow \cT$ is bijective (with inverse $\Psi^-$), as desired.

\section{Applying the methodology to difference graphs \label{sec:diff}}%%%%%%

In this section we apply the threshold graph methodology developed above to investigate a related class of graphs.  The result is a spanning tree counting formula for difference graphs that is analogous to the threshold graph formula.

A bipartite graph $H=(X,Y,E)$ is a \emph{difference graph} if there exists a function $\phi:X \cup Y  \rightarrow \R$ and a threshold $\alpha \in \R$, such that for all $x \in X,y \in Y$ it holds that $x \sim y$ if and only if $\phi (x) + \phi (y) \geq \alpha$.

Difference graphs are close cousins of threshold graphs, and many threshold graph theorems have adaptations to difference graphs; see for example  \cite{mahadev1995threshold,ross2011properties}.  The following spanning tree formula for difference graphs is obviously analogous to Theorem~\ref{thm:main}.  
\begin{theorem}[Ehrenborg and van~Willigenburg~\cite{ehrenborg2004enumerative}] \label{thm:main3}
For any difference graph $H=(X,Y,E)$, if $X$ and $Y$ are nonempty then
$$\tau(H)=\frac{1}{|X||Y|}\prod_{i=1}^{|X|}
| \{ y\in Y: \de (y) \geq i  \} | \cdot \prod_{i=1}^{|Y|}
| \{ x\in X: \de (x) \geq i  \} |.$$
\end{theorem}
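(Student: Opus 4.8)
The plan is to mirror Section~\ref{sec:comb} line for line, replacing the creation order of a threshold graph by the weight order of a difference graph and the dominating/independent dichotomy by the two sides $X$ and $Y$. First I would record the structural facts about $H$ that play the role of Proposition~\ref{prop:neigh}. Labelling $X=\{x_1,\ldots,x_{|X|}\}$ and $Y=\{y_1,\ldots,y_{|Y|}\}$ so that $\phi(x_1)\le\cdots\le\phi(x_{|X|})$ and $\phi(y_1)\le\cdots\le\phi(y_{|Y|})$, the defining inequality forces the neighbourhoods to be nested, $N(x_1)\subseteq\cdots\subseteq N(x_{|X|})$ and $N(y_1)\subseteq\cdots\subseteq N(y_{|Y|})$; equivalently, the bipartite adjacency matrix has a staircase (Young-diagram) shape once rows and columns are sorted by degree. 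Two consequences are key. If $H$ has an isolated vertex then both sides of the asserted identity vanish, so I may assume $H$ is connected; then the top vertices satisfy $\deg(x_{|X|})=|Y|$ and $\deg(y_{|Y|})=|X|$, since if $x_{|X|}\not\sim y_1$ then $y_1$ would be adjacent to no $x_i$ and hence isolated. Moreover, the staircase shape says the sorted degree sequences of $X$ and $Y$ are conjugate partitions, whence $|\{y\in Y:\deg(y)\ge i\}|$ equals the $i$-th largest $X$-degree and $|\{x\in X:\deg(x)\ge j\}|$ equals the $j$-th largest $Y$-degree.

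These identities reduce Theorem~\ref{thm:main3} to the cleaner, Theorem~\ref{thm:main2}-style statement
\[
\tau(H)=\frac{1}{|X|\,|Y|}\left(\prod_{x\in X}\deg(x)\right)\left(\prod_{y\in Y}\deg(y)\right),
\]
which is what I would prove bijectively. Define $\cF$ to be the set of functions $f:X\cup Y\to X\cup Y$ with $f(v)\in N(v)$ for every $v$; since $H$ is bipartite no vertex lies in its own neighbourhood, so $|\cF|=\prod_{x}\deg(x)\prod_{y}\deg(y)$. Define $\cT$ to be the set of spanning trees of $H$ with one vertex of $X$ marked black and one vertex of $Y$ marked white, so that $|\cT|=|X|\,|Y|\,\tau(H)$; note the black and white marks automatically sit on distinct vertices because they lie on opposite sides. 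A bijection $\Psi:\cF\to\cT$ then yields the displayed formula exactly as in Section~\ref{sec:comb}.

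The bijection follows the same three moves. The functional digraph $D$ of $f$ has out-degree one at every vertex, so each connected component of its underlying undirected graph carries a unique directed cycle; because $H$ is bipartite every such cycle alternates between $X$ and $Y$ and in particular has even length. On each cycle $C_j$ I would single out the vertex $\ell_j$ of largest $\phi$-weight together with its cycle-predecessor $b_j$ (so $f(b_j)=\ell_j$); nestedness guarantees that $\ell_j$ is adjacent to every opposite-side vertex of $C_j$. Ordering the cycles so that $\ell_1,\ldots,\ell_k$ increase in weight, I would cut the incoming edge of each $\ell_j$ and splice the $k$ resulting segments into a single directed path by adding $k-1$ edges, marking its two ends black (in $X$) and white (in $Y$). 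The inverse $\Psi^-$ reverses this: direct a marked tree toward its white end, extract the weight-increasing ``record'' subsequence $\ell_1,\ldots,\ell_k$ of the black-to-white path exactly as in Section~\ref{sec:comb}, cut the edges entering $\ell_2,\ldots,\ell_k$, and reclose each segment into a cycle, with nestedness again certifying that each reclosing edge lies in $H$.

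The main obstacle, absent in the threshold setting, is the bipartite constraint: every spliced or reclosed edge must join an $X$-vertex to a $Y$-vertex, and the final object must be a single \emph{alternating} path from an $X$-endpoint to a $Y$-endpoint. In the threshold proof the new edge $(b_j,\ell_{j+1})$ was automatically present because ``created later'' implied adjacency through one total order; here the predecessor $b_j$ and the next canonical vertex $\ell_{j+1}$ may land on the same side, in which case they cannot be adjacent. I expect the resolution is that the cut-and-splice must be made side-aware—always attaching $b_j$ to whichever endpoint of the next segment lies on the opposite side, and choosing the black and white ends to respect the $X/Y$ parity of the path—so that nestedness still supplies every new edge. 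Verifying that this side-aware surgery always produces a genuine alternating spanning path, and that $\Psi$ and $\Psi^-$ are mutually inverse, is where the real work lies; the degree bookkeeping and the reduction to the displayed identity are routine by comparison.
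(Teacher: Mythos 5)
Your framework is the right one---the reduction via conjugate degree sequences to $\tau(H)=\frac{1}{|X||Y|}\prod_{v\in X\cup Y}\de(v)$ (the paper's Theorem~\ref{thm:main4}), the set of functions $f$ with $f(v)\in N_H(v)$ counted by $\prod_v \de(v)$, the marked trees with black in $X$ and white in $Y$, and the cycle-cutting bijection---but the step you defer as ``where the real work lies'' is precisely the step your proposal gets wrong, so there is a genuine gap. Choosing $\ell_j$ to be the vertex of largest \emph{$\phi$-weight} on its cycle does not work: in a difference graph the largest-weight vertex of a cycle can perfectly well lie in $Y$ (a $Y$-vertex of huge weight is adjacent to many $X$-vertices and can out-weigh all of them). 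Already for $X=\{x\}$, $Y=\{y\}$, $\phi(x)=0$, $\phi(y)=10$, $\alpha=5$, the unique cycle of the unique $f$ has its weight-record at $y$, so your rule marks $y$ black and $x$ white---on the wrong sides for $\cT$---and with more cycles the predecessor $b_j$ lands in $X$ and the splice edge $(b_j,\ell_{j+1})$ can join two $X$-vertices, which is not an edge of $H$. The ``side-aware'' patch you gesture at (attach $b_j$ to whichever endpoint of the next segment is on the opposite side, recolor the ends by parity) is not a routine verification: once the attachment rule depends on cases, the inverse map $\Psi^-$ no longer has a canonical record-subsequence to extract, and well-definedness and invertibility both become unclear.

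The paper's resolution is to order vertices not by $\phi$ but by the \emph{bipartite creation sequence} (Proposition~\ref{prop:condiff}), and this makes the side constraint automatic rather than something to be patched. In a difference graph, every neighbor of a vertex $y\in Y$ is an $X$-vertex created \emph{later} than $y$; hence on any cycle of the functional digraph of $f$ (with $f(v)\in N_H(v)$) the latest-created vertex $\ell_j$ cannot lie in $Y$, so $\ell_j\in X$, its cycle-predecessor $b_j$ lies in $Y$, and every splice edge $(b_j,\ell_{j+1})$ joins a $Y$-vertex to an $X$-vertex created later, which is an edge of $H$; the marks $\ell_1$ (black) and $b_k$ (white) then land in $X$ and $Y$ as required, and the same reasoning handles $\Psi^-$. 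Moreover, the paper does not even rebuild the bijection: it forms the threshold graph $G$ with the same creation sequence as $H$ (first ``0'' replaced by ``$*$'', so $U=X$), observes that your function set is exactly its $\H\subseteq\cF$, and shows that the already-proven bijection $\Psi_G$ of Section~\ref{sec:comb} satisfies $\Psi_G(\H)\subseteq\cT_H$ and $\Psi^-_G(\cT_H)\subseteq\H$, so its restriction is the desired bijection. If you replace ``largest $\phi$-weight'' by ``created latest'' in your construction (or, better, reuse $\Psi_G$ as the paper does), your argument closes; as written, the central bipartiteness difficulty is identified but not resolved.
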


Theorem~\ref{thm:main3} has been proved using algebraic methods by Ehrenborg and van~Willigenburg~\cite{ehrenborg2004enumerative} and combinatorially with rook placements by Burns~\cite{burns2003bijective}.  In what follows, we prove Theorem~\ref{thm:main3} using the combinatorial machinery from the proof of Theorem~\ref{thm:main}.  The algebraic proof of Theorem~\ref{thm:main3} does not come as easily as for Theorem~\ref{thm:main}.  In fact, the eigenvalues of a difference graph are not generally integers, as is the case for threshold graphs.  It is our understanding that there is currently no analogue to Merris'~Theorem that yields a linear-algebraic proof of Theorem~\ref{thm:main3}.  

A \emph{bipartite creation sequence} is a string of characters~$c_1, c_2,\dots, c_n$ for some positive integer~$n$, in which all characters are either~``$0$"~or~``$1$". A bipartite creation sequence is viewed as a set of instructions for iteratively constructing a particular bipartite graph $(X,Y,E)$; begin with an empty graph and, for each $i=1,2,\ldots,n$, add a new vertex. If $c_i$ is ``$1$'' then the $i^{th}$~new vertex is added to the partite set $X$ with an edge to every vertex currently in $Y$.  Otherwise,  if $c_i$ is ``$0$'' then the $i^{th}$~new vertex is added to $Y$ with no edges.  As with threshold graphs, if ${i<j}$ then we say the vertex associated with $c_j$ is \emph{created later} than the vertex associated with $c_i$.  

Figure~\ref{fig:diffgraph} illustrates the bipartite graph created by the bipartite creation sequence $0010100101$. The vertices associated with character ``$1$", the partite set $X$, are drawn raised in the figure, vertices associated with character ``$0$", the partite set $Y$, are drawn lowered in the figure, and for all $i<j$ the vertices associated with $c_i$ and $c_j$ are adjacent if and only if $c_i$ is ``$0$'' and $c_j$ is ``$1$''. 

\begin{figure}
\begin{center}
\begin{tikzpicture}[>=stealth,-,auto]
    \tikzstyle{every node}=[draw,circle,fill=black!30,minimum size=9pt,inner sep=0pt]
    \node at (0,-0.5*\Gsc)        [draw=none,fill=none] {0};
    \node at (0.5*\Gsc,-0.5*\Gsc) [draw=none,fill=none] {0};
    \node at (1*\Gsc,-0.5*\Gsc)   [draw=none,fill=none] {1};
    \node at (1.5*\Gsc,-0.5*\Gsc) [draw=none,fill=none] {0};
    \node at (2*\Gsc,-0.5*\Gsc)   [draw=none,fill=none] {1};
    \node at (2.5*\Gsc,-0.5*\Gsc) [draw=none,fill=none] {0};
    \node at (3*\Gsc,-0.5*\Gsc)   [draw=none,fill=none] {0};
    \node at (3.5*\Gsc,-0.5*\Gsc) [draw=none,fill=none] {1};
    \node at (4*\Gsc,-0.5*\Gsc)   [draw=none,fill=none] {0};
    \node at (4.5*\Gsc,-0.5*\Gsc) [draw=none,fill=none] {1};

    \draw (0,0)             node (1) [label=below:$y_6$] {};
    \draw (0.5*\Gsc,0)      node (2) [label=below:$y_5$] {};
    \draw (1*\Gsc,1*\Gsc)   node (3) [label=above:$x_1$] {};
    \draw (1.5*\Gsc,0)      node (4) [label=below:$y_4$] {};
    \draw (2*\Gsc,1*\Gsc)   node (5) [label=above:$x_2$] {};
    \draw (2.5*\Gsc,0)      node (6) [label=below:$y_3$] {};
    \draw (3*\Gsc,0)        node (7) [label=below:$y_2$] {};
    \draw (3.5*\Gsc,1*\Gsc) node (8) [label=above:$x_3$] {};
    \draw (4*\Gsc,0)        node (9) [label=below:$y_1$] {};
    \draw (4.5*\Gsc,1*\Gsc) node (10) [label=above:$x_4$] {};

    \draw (3) -- (2);
    \draw (3) -- (1);
    \draw (5) -- (4);
    \draw (5) -- (2);
    \draw (5) -- (1);
    \draw (8) -- (1);
    \draw (8) -- (2);
    \draw (8) -- (4);
    \draw (8) -- (6);
    \draw (8) -- (7);
    \draw (10) -- (1);
    \draw (10) -- (2);
    \draw (10) -- (4);
    \draw (10) -- (6);
    \draw (10) -- (7);
    \draw (10) -- (9);
\end{tikzpicture}
\end{center}
\caption{A difference graph associated with the bipartite creation sequence $0010100101$. The vertices are arranged left-to-right according to the order of the creation sequence with vertices in $X$ drawn raised in the figure and vertices in $Y$ drawn lowered in the figure.
 \label{fig:diffgraph}}
\end{figure}
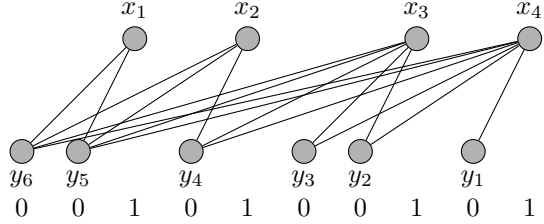

\begin{proposition}[Ross \cite{ross2011properties}] A graph $H$ is a difference graph if and only if there is a bipartite creation sequence that can construct it. \label{prop:condiff}
\end{proposition}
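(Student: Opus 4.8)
The plan is to prove the two implications separately, mirroring the proof of Proposition~\ref{prop:creat}. Consider first the easier direction, that a bipartite creation sequence yields a difference graph. I would read $\phi$-weights directly off the creation order: labelling the positions $1,2,\ldots,n$ of the sequence $c_1,c_2,\ldots,c_n$, I set $\phi:=i$ for the vertex created at position~$i$ when $c_i$ is ``$1$'' (so that vertex lies in~$X$) and $\phi:=-i$ when $c_i$ is ``$0$'' (so that vertex lies in~$Y$), and I take the threshold $\alpha:=1$. Then for $x\in X$ created at position~$i$ and $y\in Y$ created at position~$j$ we have $\phi(x)+\phi(y)=i-j$, which, since the positions are distinct integers, is at least $\alpha=1$ exactly when $j<i$. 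By the construction rule an $X$-vertex is joined to precisely those $Y$-vertices present at its creation, i.e.\ exactly those with $j<i$, so the two conditions agree and the weighting realizes $H$ as a difference graph; no intra-part edges ever arise on either side, consistent with bipartiteness.

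For the forward direction, that a difference graph admits a creation sequence, I would recover a creation order from the weighting. The guiding observation is that a correct order must place each $x\in X$ so that the $Y$-vertices preceding it are exactly $N(x)$; equivalently, I want a single total order on $X\cup Y$ in which $x\sim y$ holds if and only if $y$ precedes~$x$, for every $x\in X$, $y\in Y$. To arrange this I would pass to the transformed keys $\kappa(x):=\phi(x)$ for $x\in X$ and $\kappa(y):=\alpha-\phi(y)$ for $y\in Y$, so that the difference-graph condition $\phi(x)+\phi(y)\geq\alpha$ becomes simply $\kappa(x)\geq\kappa(y)$. Ordering all vertices by increasing $\kappa$, breaking ties by placing $Y$-vertices before $X$-vertices (and any remaining ties arbitrarily), yields the desired order: the $Y$-vertices preceding a given $x$ are exactly those with $\kappa(y)\leq\kappa(x)$, which are precisely the neighbors of~$x$. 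Reading off ``$1$'' for each $X$-vertex and ``$0$'' for each $Y$-vertex then gives a bipartite creation sequence that constructs~$H$.

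The delicate point, and the step I expect to need the most care, is the boundary case $\kappa(x)=\kappa(y)$, that is $\phi(x)+\phi(y)=\alpha$ exactly, where the vertices are adjacent yet their keys coincide; the tie-breaking convention placing all such $Y$-vertices before the corresponding $X$-vertices is exactly what forces such a~$y$ to precede such an~$x$ and hence to be recorded as a neighbor, so I would state and verify it explicitly. Equivalently, this direction can be phrased as an iterative peeling closer in spirit to Proposition~\ref{prop:creat}: the vertex placed last by the order is either an $X$-vertex adjacent to all of~$Y$ or an isolated $Y$-vertex, so it can serve as the final character; deleting it leaves an induced subgraph that is again a difference graph under the restricted weighting, and iterating builds the sequence in reverse. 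The degenerate cases in which $X$ or $Y$ is empty are immediate and can be dispatched in a single line.
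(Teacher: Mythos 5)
Your proof is correct, but it takes a genuinely different route from the paper's in both directions. The paper argues the forward direction by iterative peeling, exactly as in Proposition~\ref{prop:creat}: according as $\max_{v\in X}\phi(v)+\min_{v\in Y}\phi(v)$ is at least $\alpha$ or less than $\alpha$, either some vertex of $X$ is adjacent to all of $Y$ or some vertex of $Y$ is isolated, and removing such vertices one at a time yields the creation sequence in reverse order. You instead produce the entire creation order at once by sorting on the transformed keys $\kappa(x)=\phi(x)$ and $\kappa(y)=\alpha-\phi(y)$, under which adjacency becomes $\kappa(x)\geq\kappa(y)$, with the $Y$-before-$X$ tie-break absorbing the boundary case $\phi(x)+\phi(y)=\alpha$ --- a detail the peeling argument never needs to isolate, and which you correctly flag and resolve as the crux of the sorting argument. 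For the converse, the paper assigns $\phi$-weights iteratively as the sequence creates vertices, whereas you give closed-form weights ($\phi=i$ for an $X$-vertex created at position $i$, $\phi=-j$ for a $Y$-vertex created at position $j$, threshold $\alpha=1$) and verify the equivalence of the two adjacency rules directly; this checks out because $i-j\geq 1$ iff $j<i$ for distinct integer positions. What the paper's approach buys is uniformity: the same extremal-vertex removal works verbatim for threshold graphs and difference graphs, so the two propositions share one proof. What yours buys is explicitness: a concrete, non-inductive description of a valid creation order and of the weights; and, as you note yourself, the last vertex in your sorted order is precisely the vertex the peeling argument would remove first, so the two arguments are easily reconciled.
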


The proof of Proposition~\ref{prop:condiff} is much like the proof of Proposition~\ref{prop:creat}.  Suppose that~${H=(X,Y,E)}$ is a difference graph with weights~$\phi$ and threshold~$\alpha$ then, according as ${\max_{v\in X}\phi(v) + \min_{v\in Y}\phi(v)}$ is at least $\alpha$ or less than $\alpha$, either $X$ has a vertex adjacent to every vertex in $Y$ or $Y$ has an isolated vertex.  As before, iteratively removing those vertices gives a creation sequence for $H$ in reverse order.  Conversely, given a bipartite creation sequence the function $\phi$ can be defined iteratively, similarly to the earlier proposition. Per Proposition~\ref{prop:condiff}, we view every difference graph $H$ as created by a bipartite creation sequence.

\begin{proposition}[Mahadev and Peled \cite{mahadev1995threshold}, Theorem 2.4.4] \label{prop:bnd} Suppose $H=(X,Y,E)$ is a difference graph. If the vertices in $X$ are labeled $x_1,x_2,\ldots,x_{|X|}$ such that $\deg (x_1) \leq \deg (x_2) \leq \cdots \leq \deg (x_{|X|})$ and the vertices in $Y$ labeled  $y_1,y_2,\ldots,y_{|Y|}$ such that $\deg (y_1) \leq \deg (y_2) \leq \cdots \leq \deg (y_{|Y|})$ then, 
\begin{align*}
N(x_i)&=\{ y \in Y: \deg(y)\geq |X|-i+1\}, &\text{for }i=1,\ldots,|X|\text{, and}\\
N(y_i)&=\{ x \in X: \deg(x)\geq |Y|-i+1 \}, &\text{for }i=1,\ldots,|Y|. 
\end{align*}
\end{proposition}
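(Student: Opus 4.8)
The plan is to reduce Proposition~\ref{prop:bnd} to a single structural fact about the bipartite creation sequence and then read off both neighborhood identities by a short counting argument, exactly mirroring the way Proposition~\ref{prop:neigh} is obtained for threshold graphs. First I would record the adjacency rule that the creation sequence encodes: by Proposition~\ref{prop:condiff}, for $x\in X$ and $y\in Y$ we have $x\sim y$ if and only if $y$ is created before $x$. Indeed, an $X$-vertex (a ``$1$'') is joined exactly to the $Y$-vertices present when it is created, namely those created earlier, and no edges are added afterward. Consequently $\deg(x)=|\{y\in Y: y\text{ created before }x\}|$ for $x\in X$, and $\deg(y)=|\{x\in X: x\text{ created after }y\}|$ for $y\in Y$.

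Next I would establish the monotonicity that links degree order to creation order. Since more $Y$-vertices have accumulated the later an $X$-vertex is created, $\deg$ is nondecreasing along the creation order of $X$; hence the labeling $\deg(x_1)\le\cdots\le\deg(x_{|X|})$ may be taken to agree with creation order, and for every threshold $t$ the set $\{x\in X:\deg(x)\ge t\}$ is a suffix of $X$ in creation order (the latest-created $X$-vertices). Symmetrically, $\deg$ is nonincreasing along the creation order of $Y$, so $y_1,\ldots,y_{|Y|}$ is the reverse of the creation order of $Y$, and $\{y\in Y:\deg(y)\ge t\}$ is a creation-order prefix of $Y$ (the earliest-created $Y$-vertices).

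With these in hand the two identities follow from index bookkeeping. For the first, $x_i$ is the $i$-th created $X$-vertex, so $x_i,x_{i+1},\ldots,x_{|X|}$ are exactly the $|X|-i+1$ vertices of $X$ created at or after $x_i$. Now $y\in N(x_i)$ iff $y$ is created before $x_i$; if so, all of $x_i,\ldots,x_{|X|}$ lie after $y$, giving $\deg(y)\ge |X|-i+1$, while conversely $\deg(y)\ge |X|-i+1$ forces the creation-suffix of $X$-vertices after $y$ to reach back to index $i$, hence to contain $x_i$, so $y$ precedes $x_i$. This proves $N(x_i)=\{y\in Y:\deg(y)\ge |X|-i+1\}$. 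Running the identical argument with the roles of $X$ and $Y$ interchanged (using $\deg(y)=|\{x: x\text{ created after }y\}|$ in place of $\deg(x)$) yields $N(y_i)=\{x\in X:\deg(x)\ge |Y|-i+1\}$.

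The step requiring the most care, and which I expect to be the main (though minor) obstacle, is the treatment of degree ties, since the hypotheses only specify a weakly increasing labeling and that labeling is not unique. The resolution is that the right-hand threshold sets depend only on degree, not on the chosen labeling, and that any two vertices of equal degree in the same part have identical neighborhoods, because no vertex of the opposite part is created between them. One checks that exactly this coincidence makes the two candidate thresholds select the same set whenever consecutive labels carry equal degree: if $\deg(x_i)=\deg(x_{i+1})$ then no $y$ has $\deg(y)=|X|-i$, so $\{y:\deg(y)\ge |X|-i+1\}=\{y:\deg(y)\ge |X|-i\}$. Hence every valid labeling satisfies the stated formulas, and the remaining verification is just keeping the off-by-one indices consistent throughout.
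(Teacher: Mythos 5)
Your proof is correct, but it takes a different route from the paper. The paper disposes of Proposition~\ref{prop:bnd} by reduction: it observes that adding every edge with both endpoints in $X$ turns $H$ into a threshold graph with the same creation sequence (up to the initial character), whence the statement follows from Proposition~\ref{prop:neigh}, with the degree shift $\deg_G(x)=\deg_H(x)+|X|-1$ for $x\in X$ absorbed into the index bookkeeping. You instead prove the statement directly from the bipartite creation sequence of Proposition~\ref{prop:condiff}: adjacency means ``$y$ created before $x$,'' degrees are monotone along creation order in each part (nondecreasing on $X$, nonincreasing on $Y$), so the threshold sets are creation-order suffixes of $X$ and prefixes of $Y$, and the two identities follow by counting. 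This is exactly the ``can be proved directly'' option that the paper mentions but declines to write out. The paper's reduction is shorter and emphasizes the kinship between difference graphs and threshold graphs, but it rests on Proposition~\ref{prop:neigh}, which the paper only cites; your argument is self-contained and, notably, is the only one of the two that explicitly resolves the degree-tie issue (equal-degree vertices in the same part have identical neighborhoods, since no vertex of the opposite part is created between them), a point the paper's sketch leaves implicit. Both your tie arguments are sound; the second one (no $y$ has $\deg(y)=|X|-i$ when $\deg(x_i)=\deg(x_{i+1})$) is redundant given the first, and the cleaner statement is simply that $N(x_i)$ and the right-hand threshold set are both independent of which valid labeling is chosen.
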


This proposition can be proved directly.  It also can be seen from Proposition~\ref{prop:neigh} upon recognizing that if we form a new graph from $H$ by adding every edge with both endpoints in $X$ this new graph is a threshold graph with the same creation sequence, except for the initial character.

Proposition~\ref{prop:bnd} suggests restating Theorem~\ref{thm:main3} as follows.
\begin{theorem} \label{thm:main4} For any difference graph $H=(X,Y,E)$, if $X$ and $Y$ are nonempty then
$$  \tau (H)  =  \frac{1}{|X| \cdot |Y|}\prod_{v \in X \cup Y} \deg (v).$$
\end{theorem}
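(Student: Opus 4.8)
The plan is to reprise the bijection of Section~\ref{sec:comb} almost verbatim, letting the partite sets $X$ and $Y$ assume the roles formerly played by the dominating set $U$ and the independent set $Z$. We may assume $H$ is connected: a difference graph with $X,Y$ nonempty is connected precisely when it has no isolated vertex, so in the disconnected case $\tau(H)=0$ and the product $\prod_v\deg(v)$ vanishes as well. Let $\cF$ be the set of functions $f\colon X\cup Y\to X\cup Y$ with $f(v)\in N(v)$ for every $v$. Since $H$ is bipartite, each such $f$ sends $X$ into $Y$ and $Y$ into $X$, so $|\cF|=\prod_{v\in X\cup Y}\deg(v)$. Let $\cT$ be the set of spanning trees of $H$ bearing one black vertex in $X$ and one white vertex in $Y$, whence $|\cT|=|X|\cdot|Y|\cdot\tau(H)$. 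A bijection $\Psi\colon\cF\to\cT$ then gives $\tau(H)=|\cF|/(|X|\,|Y|)$, which is the claim.

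The structural fact that aligns $X,Y$ with $U,Z$ is that in every edge $\{x,y\}$ with $x\in X$ and $y\in Y$, the vertex $x$ is created later than $y$; indeed $x$ is joined exactly to those members of $Y$ preceding it in the creation sequence. It follows that in the functional digraph $D$ of any $f\in\cF$ the last-created vertex of each directed cycle lies in $X$: were it some $y\in Y$, its two cycle-neighbors would be $X$-vertices created even later. This is the precise counterpart of the observation that every $\ell_j$ lies in $U$ in the threshold proof; moreover the loops permitted there cannot arise here, since $H$ is loopless and $f(v)\in N(v)$ strictly, so every cycle of $D$ has length at least two.

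Accordingly $\Psi$ is built exactly as before. On each cycle $C_j$ let $\ell_j\in X$ be its last-created vertex and $b_j$ its predecessor, so $f(b_j)=\ell_j$ and hence $b_j\in Y$. Order the cycles so that $\ell_1,\ldots,\ell_k$ are in creation order, color $\ell_1$ black and $b_k$ white, delete the edges $(b_j,\ell_j)$, and insert the edges $(b_j,\ell_{j+1})$ for $j=1,\ldots,k-1$. Since $b_j$ precedes $\ell_j$ which precedes $\ell_{j+1}$, each inserted edge joins $b_j\in Y$ to the later-created $\ell_{j+1}\in X$ and so lies in $H$; removing orientations yields a spanning tree with black vertex $\ell_1\in X$ and white vertex $b_k\in Y$, a member of $\cT$.

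The inverse $\Psi^-$ copies the threshold construction and is where the verification lives. From a marked tree, orient every edge toward the white vertex, extract the directed path $\nu_1,\ldots,\nu_m$ from black to white, and take $\ell_1,\ldots,\ell_k$ to be its record vertices (each created later than all its predecessors on the path). The same adjacency rule forces each record into $X$ (a record in $Y$ would be preceded on the path by a later-created $X$-neighbor), forces each $b_j$ into $Y$ (its successor $\ell_{j+1}$ lies in $X$, with $b_k=\nu_m\in Y$ by the marking), and guarantees that $b_j$ is created before $\ell_j$; hence the edges $(b_j,\ell_j)$ used to reclose the $k$ cycles all belong to $H$. Checking that these edge swaps restore out-degree one at every vertex, and that $\Psi$ and $\Psi^-$ invert one another, is routine and formally identical to Section~\ref{sec:comb}. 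The only real obstacle is securing this parity---pinning the cycle-last and record vertices to $X$ and the white vertex $b_k$ to $Y$; once it holds, the counting factor $|X|\cdot|Y|$ and the entire bijection follow as in the threshold case.
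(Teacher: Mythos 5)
Your proposal is correct and takes essentially the same approach as the paper: both dispose of the disconnected case via isolated vertices and then run the Section~\ref{sec:comb} bijection, using bipartiteness and the fact that the $X$-endpoint of every edge of $H$ is created later than its $Y$-endpoint to pin the cycle-last vertices, the path records, and the re-closing edges inside $H$. The only difference is organizational: rather than rebuilding $\Psi$ and $\Psi^-$ natively on $H$ and asserting that the bijectivity checks are ``formally identical,'' the paper introduces the threshold graph $G$ with the same creation sequence (first character replaced by ``$*$''), regards your function class as a subset $\mathcal{H}$ of the threshold-type class $\mathcal{F}$ for $G$, and proves only the two inclusions $\Psi_G(\mathcal{H})\subseteq\mathcal{T}_H$ and $\Psi_G^{-}(\mathcal{T}_H)\subseteq\mathcal{H}$, so that well-definedness and bijectivity are inherited from Theorem~\ref{thm:main2} rather than re-verified.
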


We now give a combinatorial proof for Theorem~\ref{thm:main4}, which provides a combinatorial proof for Theorem~\ref{thm:main3}.

\begin{proof}[Proof of Theorem~\ref{thm:main4}]
Let $H=(X,Y,E)$ be a difference graph.  $H$ is disconnected if and only if it has an isolated vertex (since $H$ is connected if and only if its bipartite creation sequence begins with ``0'' and ends with ``1''), so the formula holds when ${\tau(H)=0}$. 

Now, suppose $H$ is connected and take its creation sequence.  Let $G$ be the threshold graph with the same creation sequence as $H$, except that the first character, which is ``0'', is replaced by ``$*$''. 

Let $\cF$ be the set of functions~${f:X\cup Y\to X\cup Y}$ such that ${f(x)\in N_G[x]}$ for~${x\in X}$ and~${f(y)\in N_G(y)}$ for~${y\in Y}$, and let ${\H\subseteq\cF}$ be the set of functions~${f:X\cup Y\to X\cup Y}$ such that ${f(v)\in N_H(v)}$ for all~${v\in X\cup Y}$.  Clearly, $|\H|=\prod_{v\in X\cup Y}\de(v)$.

We show that $\Psi_G(\H)\subseteq \cT_H$ and~$\Psi^{-}_G(\cT_H)\subseteq\H$, where $\cT_H$ is the set of spanning trees of~$H$ with one vertex in~$X$ marked black and one vertex in~$Y$ marked white.  This proves that $|\H|=|\cT_H|$ and establishes the theorem, because $|\cT_H|=|X||Y|\tau(H)$ and $\Psi$ is a bijection with inverse~$\Psi^-$ .

Clearly, $\Psi_G(\H)$ is a set of spanning trees of~$G$.  It remains to show that each is a subgraph of~$H$.  Suppose $f\in\H$ and~$D$ is the corresponding directed graph.  Each edge in~$D$ is an undirected edge in~$H$ by construction.  $H$ is bipartite, so every cycle in~$D$ alternates in $X$ and $Y$.  Since the latest created vertex in each cycle is a vertex in~$X$, it holds that each edge added to connect the cycles of~$D$ connects a vertex in~$Y$ with a vertex in~$X$ and is an (undirected) edge of~$H$.  Thus,~$\Psi_G(f)\in\cT_H$.

Now suppose $T\in\cT_H$ is a marked spanning tree of~$H$.  By applying $\Psi_{G}^-$ and the logic of the last paragraph, we easily find that the vertices along the path from the black vertex to the white vertex alternate partite sets.  The edges removed from this path are each directed toward a vertex in~$X$, and since the first vertex is in~$X$ (because it is marked black) the endpoints of each segment of the broken path lie in different partite sets and are adjacent in $H$.  Thus,~$\Psi^{-}_G(T)\in\H$. \end{proof}

\section{Discussion \label{sec:disc}}

The many proofs of Cayley's formula are remarkable for their diversity.  The first proofs seem to have been given by Sylvester in 1857~\cite{sylvester1857on} and Borchardt in 1860~\cite{borchardt1860uber}.  Both proofs use Kirchhoff's Matrix Tree Theorem~\cite{kirchhoff1847ueber}, which was published in 1847.

The century and a half following Kirchhoff's paper brought more interest in Cayley's formula.  Moon~\cite{moon1967various} chronicles nine additional proofs published before~1967.  Five of the proofs in Moon's survey approach the problem with generating functions: P\'olya~\cite{pólya1937kombinatorische}, Dziobek~\cite{dziobek1947formel}, Katz~\cite{katz1955probability}, G\"obel~\cite{gobel1963mutual}, and Moon, himself~\cite{moon1963another}.  Papers published by Clarke~\cite{clarke1958cayley} and R\'enyi~\cite{renyi1959some} prove the theorem by deriving recurrences from tree properties.  Shor's 1995 proof~\cite{shor1995new} also uses a recurrence.

As for direct combinatorial proofs, there are two bijective proofs among the early papers:  Cayley's polynomials~\cite{cayley1889theorem} and Pr\"ufer's sequences~\cite{prufer1918neuer}.  Actually, Cayley's proof is only written for the case~$n=6$, and his paper states that the proof is ``applicable for any value whatever of~$n$''.   R\'enyi~\cite{renyi1970enumeration} published a proof for general~$n$, using Cayley's argument, in~1970.  Two more proofs rely on bijections between functions and trees; one is due to Joyal~\cite{joyal1981theorie}\footnote{Joyal's proof is given in English by Aigner and Ziegler in \emph{Proofs from The Book}~\cite[p.~202]{aigner2010proofs}.} and the other is due to E\u gecio\u glu and Remmel~\cite{egecioglu1986bijections}.  Pitman has a recent proof~\cite{pitman1999coalescent} by double counting.

The proofs of Theorems~\ref{thm:main}~and~\ref{thm:main3} can be derived from a proof by Remmel and Williamson \cite{remmel2002spanning}.  Indeed, one may view the proof of their Theorem 2.4 as a generalisation of Joyal's proof of Cayley's formula.  We believe that it is an interesting and worthwhile pursuit to generalize all of the proofs of Cayley's formula to larger classes of graphs, including threshold graphs.

%Remmel and Williamson \cite{remmel2002spanning} (subsumes both of our results)
%Martin and Reiner \cite{martin2003factorizations} (generalize thr graphs)
%Duval, Klivans, Martin \cite{duval2009simplicial} (generalize for thr graphs)
%Ehrenborg and van Willigenburg \cite{ehrenborg2004enumerative} (proof for diff graph)
%Burns \cite{burns2003bijective} (bij proof for diff graph, based on rook placements)
\section{Acknowledgements}

The authors are grateful to Caroline Klivans, Victor Reiner, and Michael Slone for supplying additional references for this paper.

Stephen Chestnut was supported by U.S. Department of Education GAANN grant P200A090128.

\bibliographystyle{abbrv}
\bibliography{SpanningTreesOfThresholdGraphs}
\end{document}